\documentclass[12pt,amstex,leqno]{amsart}
\usepackage{latexsym}
\usepackage{amsthm}
\usepackage{amsmath}

\usepackage{amsfonts}
\usepackage{amssymb}
\usepackage{appendix}
\input xy
\xyoption{all}

\swapnumbers

\newtheorem{theorem}{Theorem}[section]
\newtheorem{lemma}[theorem]{Lemma}

\newtheorem{prop}[theorem]{Proposition}

\newtheorem{corollary}[theorem]{Corollary}

\theoremstyle{definition}
\newtheorem{definition}[theorem]{Definition}

\newtheorem{example}[theorem]{Example}
\newtheorem{remark}[theorem]{Remark}
\newtheorem{notation}[theorem]{Notation}

\newtheorem{open}[theorem]{Open Problem} 
\newtheorem{conclusion}[theorem]{Conclusion} 

\numberwithin{equation}{section}

\renewcommand\Vec{\operatorname{\bf Vec}}

\newcommand\op{\operatorname{op}}

\newcommand\Set{\mathbf{Set}}

\newcommand\Top{\operatorname{\bf Top}}

\newcommand\id{\operatorname{id}}

\newcommand\ca{\mathcal {A}} 
\newcommand\cb{\mathcal {B}}

\newcommand\cd{\mathcal {D}}

\newcommand\cg{\mathcal {G}}

\newcommand\ck{\mathcal {K}}
\newcommand\cl{\mathcal {L}}

\newcommand\cu{\mathcal {U}}
 
\newcommand\du{\mathbb U}

\newcommand\G{\mathcal G}

\newcommand\N{\mathbb N}

\date{December 21, 2018}

\begin{document}

\author[Ad\'amek]{J. Ad\'amek}

\address{\newline Department of Mathematics,\newline Faculty of Electrical Engineering,\newline
Czech Technical University in Prague\newline Czech Republic} 
\email{j.adamek@tu-bs.de}

\author[Brooke-Taylor]{A. Brooke-Taylor}
\address{\newline School of Mathematics,\newline
University of Leeds
\newline Leeds, LS2 9JT
\newline United Kingdom}
\email{a.d.brooke-taylor@leeds.ac.uk}

\author[Campion]{T. Campion}
\address{\newline Department of Mathematics
\newline University of Notre Dame
\newline 255 Hurley, Notre Dame, IN 4655
\newline USA}
\email{tcampion@nd.edu}

\author[Positselski]{L. Positselski}
\address{\newline  Institute of Mathematics,
\newline Czech Academy of Sciences,
\newline \v Zitn\'a~25, 115~67 Prague~1,
\newline Czech Republic}
\email{positselski@yandex.ru}

\author[Rosick\'y]{J. Rosick\'y} 
\address{\newline Department of Mathematics and Statistics,\newline
Masaryk University,\newline
 Kotl\'a\v rsk\'a 2, 611 37 Brno,\newline Czech Republic}
\email{rosicky@math.muni.cz}

\dedicatory{Dedicated to the memory of V{\v{e}}ra Trnkov{\'{a}}, a great teacher and dear friend}

\title{Colimit-Dense Subcategories}

\thanks{L. Positselski was supported by research plan RVO: 67985840 and J. Rosick\' y 
	by the Grant agency of the Czech republic under the grant P201/12/G028.} 

\begin{abstract}
	Among cocomplete categories, the locally presentable ones can be defined as those with a strong generator consisting of presentable objects. Assuming Vop{\v{e}}nka's Principle, we prove that a cocomplete  category is locally presentable  iff it has a colimit dense subcategory and a generator consisting of presentable objects. We further show that a $3$-element set is colimit-dense in $\Set^{\op}$, and spaces of countable dimension are colimit-dense in $\Vec^{op}$.
	
\end{abstract}

\maketitle

\section{Introduction}

Our paper is devoted to the question of existence of \textit{colimit-dense} subcategories of a given category $\ck$, i.e.,  small, full subcategories such that every object of $\ck$ is a colimit of a diagram in that subcategory. We show e.g. that a set of $3$ elements forms a colimit-dense subcategory of the dual of $\Set$.
In contrast, finitely-dimensional vector spaces are not colimit-dense in the dual of $\Vec$ - but spaces of countable dimension are.

 Recall that a small, full subcategory $\cg$ of $\ck$ is called \textit{dense} if every object $X$ of $\ck$ is the canonical colimit of objects of $\cg$. That is, the diagram 
 $$D_X:\cg/X\to\ck$$
  assiging to every object $g:G\to X$ its domain has colimit $X$ with the canonical colimit cocone. Whether or not $\Set^{\op}$ has a dense subcategory depends on the following set-theoretical assumption:
\begin{enumerate}
\item[(M)] There exists a cardinal $\lambda$ such that every $\lambda$-complete ultrafilter is principal.
\end{enumerate}
Indeed, (M) is equivalent to $\Set^{\op}$ having a dense subcategory, as proved by Isbell \cite{I}. We present a short proof in Section 3, and discuss the analogous result for the dual of $\Vec$, the category of vector spaces over a given field, in Section 4.
 
 This is closely related to a  joint paper of two of the co-authors with V{\v{e}}ra Trnkov{\'{a}} \cite{RTA}. There we investigated properties of locally presentable categories depending on the validity of \textit{Vop{\v{e}}nka's Principle}. This principle states that there is no rigid proper class of graphs, i.e., a class where the only homomorphisms are the identity endomorphisms. This is a famous set-theoretical statement which implies $\neg$(M). And since (M) is consistent with set theory, the negation of Vop{\v{e}}nka's Principle is consistent as well. On the other hand, Vop{\v{e}}nka's Principle is consistent with any set theory in which huge cardinals exist. These and other facts can be found e.g.\ in Chapter~6 of \cite{AR}. In the above joint paper \cite{RTA} we proved that if Vop{\v{e}}nka's Principle is assumed, the following hold:
 
 \begin{enumerate}
 \item[(a)] every full subcategory of a locally presentable  category is \textit{bounded}, i.e., it has a dense subcategory,  
 \item[(b)] every full subcategory of a locally presentable  category closed under limits is reflective and locally presentable,
 \end{enumerate}
 and
 \begin{enumerate}
 \item[(c)] every cocomplete bounded category is locally presentable.
 \end{enumerate}
 Furthermore, each of the statements (a)--(c) was also proved to imply Vop{\v{e}}nka's Principle.
  
  Unfortunately, one of the statements of \cite{RTA} turns out to be incorrect, and one of our aims is to repair that statement. According to Theorem~9 of \cite{RTA} Vop{\v{e}}nka's Principle implies that every category with a colimit-dense subcategory is bounded. This result also appeared as Theorem 6.35 in \cite{AR}. However, $\Set^{\op}$ is a counter-example, as mentioned above. We provide a correction by proving a weaker statement.
Let us call  a generator of a category \textit{presentable} if it consists of presentable objects. 
The weaker statement proved in Section 2 is the following

\begin{theorem}
Vop{\v{e}}nka's Principle implies that every cocomplete category having both a colimit-dense subcategory and a presentable generator is bounded. 
\end{theorem}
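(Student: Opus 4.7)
The plan is to embed $\ck$ fully into a locally presentable category and then invoke statement~(a) from \cite{RTA}, which under Vopěnka's Principle asserts that every full subcategory of a locally presentable category is bounded.

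A clean preliminary observation is that the colimit-dense subcategory $\cg_1$ is automatically a strong generator of $\ck$. Indeed, suppose $m\colon M\hookrightarrow X$ is a monomorphism through which every morphism $G\to X$ with $G\in\cg_1$ factors. Fix a diagram $D\colon\ci\to\cg_1$ with colimit cocone $(\tau_i\colon D(i)\to X)$, and factor each $\tau_i$ as $m\sigma_i$. Since $m$ is monic, the family $(\sigma_i)$ is again a cocone on $D$, so the colimit universal property produces $\sigma\colon X\to M$ with $m\sigma\tau_i=\tau_i$ for all $i$, hence $m\sigma=\id_X$; thus $m$ is an isomorphism. So $\ck$ possesses the strong generator $\cg_1$ in addition to the presentable generator $\cg_0$.

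Now set $\cg=\cg_0\cup\cg_1$ as a small full subcategory of $\ck$, and consider the nerve $N\colon\ck\to[\cg^{\op},\Set]$, $Y\mapsto\ck(-,Y)|_\cg$. It is faithful because $\cg\supseteq\cg_0$ is a generator, and reflects isomorphisms because $\cg\supseteq\cg_1$ is strong. Given a natural transformation $\alpha\colon N(Y)\to N(Y')$, the colimit-density yields $Y=\colim D$ with $D\colon\ci\to\cg_1$ and cocone $(\tau_i)$; by naturality of $\alpha$ the family $\bigl(\alpha_{D(i)}(\tau_i)\bigr)_i$ is a cocone on $D$ with vertex $Y'$, inducing a unique $f\colon Y\to Y'$ satisfying $f\tau_i=\alpha_{D(i)}(\tau_i)$. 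If one can show $N(f)=\alpha$, then $N$ is a full embedding of $\ck$ into the locally presentable category $[\cg^{\op},\Set]$, and statement (a) finishes the argument.

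The main obstacle is verifying $\alpha_G(h)=f\circ h$ for \emph{every} $h\colon G\to Y$ with $G\in\cg$, not merely for the $\tau_i$ coming from the chosen diagram. This is where Vopěnka's Principle is essential: a systematic failure of this identity --- across all $Y$, $Y'$, $\alpha$ and $h$ --- would yield a proper class of pairwise non-isomorphic counterexamples that, using the presentability of objects of $\cg_0$ to bound their complexity, can be encoded as a rigid class of graphs, contradicting VP. This is analogous to the style of argument employed for statements (a)--(c) in \cite{RTA}, and the fact that $\Set^{\op}$ is a counterexample when the presentable generator hypothesis is dropped indicates that the presentability bound is precisely what enables the graph-encoding step.
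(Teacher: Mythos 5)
There is a genuine gap, and it sits exactly where you placed your ``main obstacle''. For a small full subcategory $\cg$ of $\ck$, the nerve $N\colon\ck\to\Set^{\cg^{\op}}$ is fully faithful if and only if $\cg$ is dense in $\ck$. So verifying $N(f)=\alpha$, i.e.\ $\alpha_G(h)=f\circ h$ for every $h\colon G\to Y$, is not a technical step to be discharged along the way: it is literally the statement that $\cg_0\cup\cg_1$ is dense, which is the conclusion of the theorem. (It also makes your appeal to statement (a) redundant: once $N$ is a full embedding you already possess a small dense subcategory and are done.) Moreover, no diagram chase from colimit-density alone can close this step, since the paper's own example shows it can fail: under $\neg$(M), $\Set^{\op}$ has a colimit-dense subcategory but no dense one, so the nerve of a colimit-dense subcategory is in general not full. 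Your proposed rescue --- that a ``systematic failure'' of the identity yields a proper class of pairwise non-isomorphic counterexamples encodable as a rigid class of graphs --- is a gesture rather than a proof: a single quadruple $(Y,Y',\alpha,h)$ with $\alpha_G(h)\neq f\circ h$ already destroys fullness, and you give no construction producing a proper class from such a failure, no rigidity argument, and no actual use of presentability beyond the phrase ``bound their complexity''. Encodings of this kind exist in \cite{RTA} and \cite{AR}, but they constitute the entire content of such theorems and would have to be exhibited.

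For contrast, the paper does not attempt a full embedding into presheaves at all. It uses the presentable generator $\G_p$ concretely: choosing $\lambda$ so that the hom-functors of objects of $\G_p$ preserve $\lambda$-filtered colimits, it closes the colimit-dense subcategory under $\lambda$-small colimits and replaces each chosen diagram $B_K$ by its $\lambda$-filtered extension $B'_K$, obtaining a \emph{consistently} colimit-dense subcategory in the sense of Definition \ref{consistent}. Theorem \ref{correction0} then builds a faithful functor $U_0E\colon\ck\to\Set$ preserving the colimits of the diagrams $B_K$, which is precisely what the (corrected) proof of Theorem 9 of \cite{RTA} requires --- and it is inside that argument, not in any nerve computation, that Vop\v{e}nka's Principle actually does its work. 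Your preliminary observation that a colimit-dense subcategory is a strong generator is correct (and is stated in the paper), but it buys only a faithful, conservative nerve, not a full one; the gap between those two is exactly the gap between colimit-dense and dense on which the whole paper turns.
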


 Recall from \cite{AR} that locally presentable categories  are precisely the cocomplete categories with a presentable strong generator. However, a presentable generator is not sufficient: the category $\Top$ of topological spaces has a presentable generator $\{1\}$, but it is not locally presentable. And a strong generator (or even a colimit-dense subcategory) is also not sufficient: every complete lattice is colimit-dense in itself, but not every one  is algebraic (= locally presentable). The combination as in the above theorem is sufficient under Vop{\v{e}}nka's Principle -- and we do not know at present whether that assumption is needed:
 
 \begin{open}
  If every category with a presentable generator and a colimit-dense subcategory is bounded, does Vop{\v{e}}nka's Principle follow?
  \end{open}
 
 As a byproduct of our study, we present parallel proofs for two classical results: one is that the codensity monad of the embedding of finite sets in $\Set$ is the ultrafilter monad. This was already proved by Kennison and Gildenhuis in 1971, see \cite{KG}, and a nice new proof is due to Leinster \cite{L}, which we recall in Section 3.
 The other result is that the codensity monad of the embeding of finite-dimensional vector spaces into $\Vec$ is the double-dualization monad. This is due to Leinster \cite{L} but the (almost equivalent) fact that the double-dual of a vector space is its profinite completion is well-known, see for example \cite{Be} or \cite{Ba}. We present a new proof, based on Leinster's ideas, in Section 4.

 \section{Colimit-Dense Subcategories in General}
 
 Recall that an object $K$ of a category $\ck$ is \textit{presentable} if
its hom-functor preserves $\lambda$-filtered colimits for some regular cardinal $\lambda$.
 Recall further that a \textit{generator} is a set $\G$ of objects whose hom-functors are collectively faithful; it is considered as a full subcategory of $\ck$.
 A generator $\G$ is called
 \begin{enumerate}
 \item[(a)] \textit{presentable} if all objects of it are presentable, and
 \item[(b)] \textit{strong} if every monomorphism $L \rightarrowtail K$ such that all morphisms from $G\in \G$ to $K$ factorize through it is invertible.
 \end{enumerate}
 
 \begin{definition}\label{consistent}
  A small, full subcategory $\cg$ of the category $\ck$ is called \textit{colimit-dense} if every object $K$ 
of $\ck$ is a colimit of some diagram in $\G$. It is called \textit{consistently colimit-dense} if it contains a generator $\cg_0$ of $\ck$ for which every object of $\ck$ is a colimit of a diagram in $\cg$ such that all hom-functors of objects of $\cg_0$ preserve this colimit.
\end{definition}

It is easy to see that the following implications
$$
\mbox{dense} \quad \Rightarrow \quad \mbox{colimit-dense} \quad \Rightarrow\quad \mbox{strong generator}
$$
hold. None  can be reverted: $K$ is clearly colimit-dense in the category $\Vec$ of vector spaces over $K$, but it is not dense.
 (In contrast, $K\times K$ is dense.) All sets of power at most $2$ form a strong generator of $\Set^{op}$, which is demonstrated in Remark \ref{colimdense1} 
 not to be colimit-dense.
 And in the category of compact Hausdorff spaces the space $1$ forms a strong generator which is not colimit-dense. Finally, not every generator is strong, e.g.\ the discrete one-element graph is a generator of the category of graphs that is not strong.
 
Recall from the Introduction that a category is said to be \textit{bounded} if it has a dense generator. And that it is locally presentable iff it is cocomplete and has a strong, presentable generator. 
 
\begin{theorem}\label{correction0}
Assume Vop\v enka's Principle. Then a  category $\ck$ with a consistently colimit-dense subcategory is bounded. 
\end{theorem}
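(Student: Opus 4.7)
The plan is to show that the given subcategory $\cg$ is itself dense in $\ck$, which will exhibit $\ck$ as bounded. Equivalently, I aim to prove that the canonical functor
\[
E\colon \ck \longrightarrow [\cg^{\op}, \Set], \qquad E(K) = \ck(-, K)\bigr|_{\cg}
\]
is fully faithful. Faithfulness is automatic, since the inner generator $\cg_0 \subseteq \cg$ already distinguishes parallel morphisms. For fullness, I fix a natural transformation $\alpha\colon E(K) \to E(L)$ and use the consistent colimit-density hypothesis to select a diagram $D\colon \ci \to \cg$ with colimit cocone $(c_i\colon D_i \to K)$ preserved by every hom-functor $\ck(G_0, -)$, $G_0 \in \cg_0$. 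Naturality of $\alpha$ assembles the morphisms $\alpha_{D_i}(c_i)$ into a cocone on $D$ in $\ck$, and the colimit universal property supplies a unique $f\colon K \to L$ with $f \circ c_i = \alpha_{D_i}(c_i)$ for every $i$.

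Next, I would verify that $E(f) = \alpha$ on components indexed by objects of $\cg_0$. Given $g\colon G_0 \to K$ with $G_0 \in \cg_0$, preservation of the colimit by $\ck(G_0, -)$ yields a factorisation $g = c_i \circ h$ through some $D_i$; combining the naturality square of $\alpha$ with the defining equation of $f$ produces $\alpha_{G_0}(g) = f \circ g$. Since $\cg_0$ generates, this also pins down $f$ as the only candidate preimage of $\alpha$ under $E$.

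The main obstacle is to promote the equality $\alpha_G = E(f)_G$ from $G \in \cg_0$ to arbitrary $G \in \cg \setminus \cg_0$, because then the hom-functor $\ck(G, -)$ need not preserve the chosen colimit, so general morphisms $G \to K$ are invisible to the cocone used to build $f$. This is precisely where Vop\v enka's Principle enters, and I would attempt a rigidity argument: suppose $E$ fails to be full; each failure produces witness data $(K, L, \alpha, G, g)$ with $\alpha_G(g) \neq f \circ g$, which can be encoded as a graph whose automorphisms are constrained by $\cg_0$ and the consistency data, and one assembles the resulting class into a proper class of pairwise non-isomorphic graphs admitting no non-identity homomorphisms, contradicting VP. The delicate technical point is arranging this encoding so that the consistency hypothesis really does block all accidental homomorphisms between distinct members of the class; an alternative, perhaps cleaner, route is to combine the partial fullness above with the VP-consequence from \cite{RTA} that every full subcategory of a locally presentable category is bounded, applied to the essential image of $E$ inside the locally presentable presheaf category $[\cg^{\op}, \Set]$.
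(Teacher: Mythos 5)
Your construction of $f$ from $\alpha$ via the preserved colimit, and the verification that $E(f)$ agrees with $\alpha$ on components from $\cg_0$, are correct as far as they go --- but the proof stops exactly where the theorem's difficulty begins, and the two ways you propose to finish both fail. First, the step you assign to Vop\v enka's Principle is a pure gesture: you say you ``would attempt a rigidity argument,'' but constructing a rigid proper class of graphs from the witness data $(K,L,\alpha,G,g)$ is the \emph{entire} content of such an argument, and nothing in your sketch indicates why distinct witnesses should admit no homomorphisms between their encodings, nor even why the witnesses form a proper class rather than a set. Second, your fallback is circular: applying the consequence (a) of Vop\v enka's Principle from \cite{RTA} to the essential image of $E$ inside the locally presentable category $\Set^{\cg^{\op}}$ shows that this full subcategory is bounded, but transferring boundedness back to $\ck$ requires $E$ to be full and faithful --- precisely the fullness you could not establish; a merely faithful functor does not reflect density of a subcategory of its image. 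Note also that your overall target, fullness of $E\colon\ck\to\Set^{\cg^{\op}}$ (i.e., density of $\cg$ itself), is strictly stronger than the theorem's conclusion, which only asserts that \emph{some} small dense subcategory exists; the paper never attempts to prove $\cg$ dense, and it is unclear that this stronger statement holds.

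The paper's actual proof takes a different route that your proposal never produces a substitute for: it uses $\cg_0$ to form the canonical faithful functor $\ck\to\Set^{\cg_0^{\op}}$, which by the consistency hypothesis preserves the chosen colimits $K=\colim B_K$, and then composes with $U_0=\coprod_{G\in\cg_0}\Set^{\cg_0^{\op}}(EG,-)$ to obtain a faithful functor $U_0E\colon\ck\to\Set$ preserving the colimits of all the diagrams $B_K$. Such a functor is exactly the input required to run the remaining (correct) parts of the proof of Theorem~9 of \cite{RTA} (equivalently, item (a) of the proof of Theorem~6.35 of \cite{AR}; cf.\ Remark~\ref{correction1}), and it is there --- not in any ad hoc encoding of fullness failures --- that Vop\v enka's Principle is applied. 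To repair your proposal you would have to either import that machinery wholesale or actually carry out the rigid-class construction you allude to; as written, the proof has a genuine gap at its central step.
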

\begin{proof}
The proof of Theorem 9 in \cite{RTA} is valid with one exception:
the calculations in Part (ii) on page 168 are incorrect. (The same is true about item (a) of the proof of Theorem 6.35 of \cite{AR}.)  We correct those calculations as follows:

Let $\G$ be a consistently colimit-dense generator of a category $\ck$, and let $\G_0$ be the generator consisting of all members $G \in \cg$ such that the hom-functor of $G$ preserves the given colimits $K = colim B_K$ of diagrams $B_K$ in $\G$ (for all objects $K$). Take
the canonical functor $E:\ck\to\Set^{\G_0^{\op}}$ assigning to an object $K$ the domain-restriction of $\ck(-,K)$ to the dual of $\cg_0$. 
This functor is faithful (because $\G_0$ is a generator) and it preserves colimits of the given diagrams $B_K$ for all objects $K$. For every object $G$ of $\cg_0$ we obtain a presheaf $EG$ on $\G_0$
and define $U_0 : \Set^{\G_0^{\op}} \to \Set$ as the coproduct of the corresponding representable functors:

$$
U_0=\coprod_{G\in\G_0}\Set^{\G_0^{\op}}(EG,-).
$$
Then $U_0$ is faithful and preserves colimits of the given diagrams $B_k$ postcomposed by $E$, therefore, the composite functor
$$U_0E:\ck\to\Set.
$$ 
is faithful and preserves the colimits of all the diagrams $B_K$.
The above mentioned calculations in (ii) (or in item (a), resp.) are correct when the functor U there is substituted by $U_0E:\ck\to\Set.$
\end{proof}

\begin{corollary}\label{correction2}
	Assume Vop{\v{e}}nka's Principle. A category is locally presentable iff it is cocomplete  and has a colimit-dense subcategory and a presentable generator.
\end{corollary}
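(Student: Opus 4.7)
The forward implication is essentially definitional: a locally presentable category is cocomplete with a presentable strong generator, hence in particular has a presentable generator; and choosing $\lambda$ so large that this generator consists of $\lambda$-presentable objects, the essentially small subcategory of $\lambda$-presentable objects is dense and therefore colimit-dense.

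For the nontrivial direction, let $\ck$ be cocomplete with a colimit-dense subcategory $\cg$ and a presentable generator $\cg_0$. The plan is to reduce to Theorem~\ref{correction0} by upgrading $\cg$ to a \emph{consistently} colimit-dense subcategory containing $\cg_0$. Pick a regular cardinal $\lambda$ for which every object of $\cg_0$ is $\lambda$-presentable (possible because $\cg_0$ is a set). Let $\cg^\ast$ be a small full subcategory of $\ck$ containing $\cg\cup\cg_0$ together with a chosen representative of the colimit (taken in $\ck$) of every $\lambda$-small diagram in $\cg$. For any $K\in\ck$, fix a colimit presentation $K=\colim D$ with $D\colon\cd\to\cg$. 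Writing $\cd$ as the $\lambda$-filtered union of its $\lambda$-small full subcategories $\cd_i$, one has the standard decomposition
\[
K \;=\; \colim_{i}\; \colim\bigl(D|_{\cd_i}\bigr),
\]
where each $\colim(D|_{\cd_i})$ lies in $\cg^\ast$ and the outer colimit is $\lambda$-filtered. Because every $G\in\cg_0$ is $\lambda$-presentable, its hom-functor preserves this outer colimit. Thus $\cg^\ast$ is consistently colimit-dense via the generator $\cg_0$, and Theorem~\ref{correction0} applies to give that $\ck$ is bounded. Finally, fact~(c) of the introduction -- that under Vop\v enka's Principle every cocomplete bounded category is locally presentable -- yields the conclusion.

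The main obstacle is the passage from colimit-dense to consistently colimit-dense, since the hom-functors of the presentable generator generally fail to preserve the arbitrary colimits witnessing colimit-density. The remedy is the usual device of replacing an arbitrary colimit by a $\lambda$-filtered colimit of $\lambda$-small sub-colimits, which is preserved by $\lambda$-presentable objects; closing under $\lambda$-small colimits keeps the resulting subcategory small. Once this is in place, Theorem~\ref{correction0} and the previously established consequence~(c) of Vop\v enka's Principle do all the remaining work.
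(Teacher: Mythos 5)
Your proof is correct and follows essentially the same route as the paper: the paper likewise chooses $\lambda$ so that the presentable generator's hom-functors preserve $\lambda$-filtered colimits, closes the colimit-dense subcategory under $\lambda$-small colimits, and rewrites each colimit presentation $K=\colim B_K$ as a $\lambda$-filtered colimit (via the free completion of the domain of $B_K$ under $\lambda$-small colimits, which is your decomposition into $\lambda$-small subdiagrams in different words), obtaining a consistently colimit-dense subcategory to which Theorem~\ref{correction0} applies. Your explicit appeal to fact~(c) of the introduction is also exactly what the paper's ``the result follows'' implicitly uses.
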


\begin{proof}
Let $\ck$ be a cocomplete category with a presentable generator $\G_p$ and a colimit-dense one $\G_c$. There is a regular cardinal $\lambda$ such that $\ck(G,-)$ preserves $\lambda$-filtered colimits for every $G\in\G_p$. Let $\G$ be the closure of $\G_c$ under 
$\lambda$-small colimits (i.e.,  colimits over diagrams having less than $\lambda$ morphisms).
For every object $K$ we have the chosen diagram $B_K$ in  $\G_c$ and we denote by
$B'_K$ its extension to $\G$ obtained by a free completion of the domain of $B_K$ under $\lambda$-small colimits.
This is a $\lambda$-filtered diagram in $\G$, thus hom-functors of objects of  $\G_p$
 preserve the colimit of $B'_K$.
Therefore, $\G \cup \G_p$ is a clearly a consistently colimit-dense subcategory. Thus the result follows from Theorem \ref{correction0}.
\end{proof}

%

However, Vop{\v{e}}nka's Principle  does \textit{not} imply  that a cocomplete category with a colimit-dense subcategory is locally presentable (that is, Theorem~9 of \cite{RTA} and Theorems 6.35 and 6.37 of \cite{AR} are false). We will see this in the next Section. The next example shows that  the converse implication holds:

\begin{example}\label{ex2}
 Assuming the negation  of Vop{\v{e}}nka's Principle (which, recall, is consistent with set theory) the following 
 category $\ck$ was proved in \cite[Example 1]{AHR} to be cocomplete and non-bounded, although it has a finite  colimit-dense subcategory. (Unfortunately, $\ck$ does not have a presentable generator.) Let $\cl$ be a large rigid class
 of graphs. Objects of $\ck$ are triples $(X,Y,\alpha)$ where $X$ is a set, $Y\subseteq X$ and $\alpha$ is a graph, i.e.\ a binary relation, on $Y$. Morphisms
 $$
 f\colon (X,Y,\alpha) \to (X', Y', \alpha')
 $$
 are functions $f\colon X\to X'$ such that $f$ extends a graph homomorphism from $(Y, \alpha)$ to $(Y', \alpha')$ and for every $x\in X\setminus Y$ we have
 $$
 f(x) \in \big(X' \setminus Y'\big) \cup \bigcup_{L\in \cl} \bigcup_{h\colon L\to (Y', \alpha')} h[L]\,.
 $$
\end{example}
 
\begin{remark}\label{correction1}
The assumptions of Theorem \ref{correction0} can be weakened to the existence of a colimit-dense subcategory $\G$ and a faithful functor
$U:\ck\to\Set$ preserving colimits of the diagrams $B_K$ used in expressing $\ck$-objects by $\G$-objects. This follows from our proof: use $U$ in place of $U_0E$.
\end{remark}
 
\section{Colimit-Density in $\Set^{\op}$}
 
Recall that an ultrafilter $\cu$ on a set $X$ is called $\lambda$-\textit{complete}, for an infinite cardinal $\lambda$, if it is closed under intersections of less than $\lambda$ members. This can be expressed via $\lambda$-\textit{partitions} of $X$, i.e., partitions $(X_i)_{i \in n}$ of $X$ into $n <\lambda$ nonempty subsets: an ultrafilter is $\lambda$-complete iff for every $\lambda$-partition $(X_i)$ it contains precisely one member $\alpha(X_i)$. This gives rise to a function $\alpha$ from $\lambda$-partitions of a set to its power-set
such that $\alpha(X_i)_{i \in n} = X_j$ for some $j \in n$.

\begin{lemma}[Galvin, Horn \cite{GH}]\label{ultra0} For every set $X$ a collection of subsets is a $\lambda$-complete ultrafilter	
	iff it contains a unique member in every $\lambda$-partition of $X$. That is, the above passage $\cu \mapsto \alpha$ is bijective.
\end{lemma}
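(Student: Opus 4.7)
The plan is to prove both directions of the biconditional and then conclude bijectivity. The forward direction is nearly tautological: given a $\lambda$-complete ultrafilter $\cu$ and a $\lambda$-partition $(X_i)_{i \in n}$, uniqueness of a member of $\cu$ among the $X_i$ follows from pairwise disjointness (two members would force their empty intersection into $\cu$), and existence follows because if no $X_i$ lay in $\cu$, the ultra-property would place every complement $X \setminus X_i$ into $\cu$, whereupon $\lambda$-completeness would drag $\emptyset = \bigcap_{i \in n}(X \setminus X_i)$ into $\cu$, a contradiction.

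For the backward direction, suppose $\cu$ is a collection of subsets with $\emptyset \notin \cu$ that contains a unique member in every $\lambda$-partition. I plan to verify the axioms of a $\lambda$-complete ultrafilter one by one using carefully chosen partitions. The equality $X \in \cu$ comes from the $1$-partition $(X)$; for every nonempty proper $A \subseteq X$, the $2$-partition $\{A, X \setminus A\}$ forces exactly one of $A$, $X \setminus A$ into $\cu$. Upward closure, namely that $A \in \cu$ and $A \subsetneq B \subsetneq X$ imply $B \in \cu$, comes from first applying uniqueness to the $3$-partition $\{A, B \setminus A, X \setminus B\}$ to exclude $X \setminus B$ from $\cu$, and then to $\{B, X \setminus B\}$ to force $B \in \cu$. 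For closure under $\lambda$-small intersections, given $A_i \in \cu$ for $i < n < \lambda$, I would form the ``prefix'' partition consisting of the nonempty pieces among $B = \bigcap_i A_i$ and $C_i = (\bigcap_{j < i} A_j) \setminus A_i$; this has at most $n+1 < \lambda$ parts. If some $C_i$ were the $\cu$-member, upward closure would place $X \setminus A_i$ into $\cu$, contradicting $A_i \in \cu$ via the $2$-partition $\{A_i, X \setminus A_i\}$. Hence $B \in \cu$ (trivially so when $B = X$).

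Bijectivity of the passage $\cu \mapsto \alpha$ is then immediate: injectivity because the values of $\alpha$ on $2$-partitions, together with the conventions $X \in \cu$ and $\emptyset \notin \cu$, already determine $\cu$; surjectivity because the construction above produces a $\lambda$-complete ultrafilter from any function $\alpha$ picking a member of each $\lambda$-partition.

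The main obstacle I anticipate is the $\lambda$-intersection step: the more natural atomic partition generated by the $A_i$ can have up to $2^n$ parts, which need not be smaller than $\lambda$ unless $\lambda$ is a strong-limit cardinal. The size-$(n+1)$ ``prefix'' refinement sidesteps this cardinal-arithmetic issue cleanly and is the essential trick that makes the argument go through for an arbitrary infinite $\lambda$.
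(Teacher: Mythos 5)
You should first note that the paper offers no proof of this lemma at all --- it is stated with an attribution to Galvin and Horn \cite{GH} --- so there is no internal argument to compare against; your proposal supplies a proof, and its core is correct. The forward direction is as routine as you say. Your backward direction is complete and correctly organized: the $1$- and $2$-block partitions give $X\in\cu$ and the ultra property, the partition $\{A,\,B\setminus A,\,X\setminus B\}$ gives upward closure, and the prefix partition $\{B\}\cup\{C_i\}_{i<n}$, which has fewer than $\lambda$ blocks, is exactly the right device for $\lambda$-small intersections --- the more obvious atomic (Boolean) refinement has up to $2^n$ blocks and would only work for strong limit $\lambda$, as you observe. Note also that it is the \emph{existence} half of the unique-member hypothesis that rules out $B=\emptyset$ (the nonempty $C_i$'s alone would form a partition with no member in $\cu$), and that your added convention $\emptyset\notin\cu$ is a genuinely necessary repair of the statement as literally given: adjoining $\emptyset$ to an ultrafilter does not disturb the unique-member property, since blocks of partitions are nonempty.

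There is, however, one genuine flaw, in your final paragraph: surjectivity is false in the form you assert it. It is \emph{not} true that every function $\alpha$ picking one block from each $\lambda$-partition arises from a $\lambda$-complete ultrafilter. For distinct $x,y\in X$ nothing prevents a partition-by-partition arbitrary $\alpha$ from choosing $\{x\}$ in $\{\{x\},\,X\setminus\{x\}\}$ and $\{y\}$ in $\{\{y\},\,X\setminus\{y\}\}$; no ultrafilter contains two disjoint sets, so such an $\alpha$ is not in the image of the passage $\cu\mapsto\alpha$. Your ``construction above'' takes as input a \emph{collection meeting every partition in exactly one member}, and the image of an arbitrary $\alpha$ need not have that property. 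The bijectivity claim is only correct with the codomain restricted: either to choice functions whose image meets every $\lambda$-partition in exactly one member --- in which case surjectivity really is immediate from your backward direction --- or, matching how the paper uses the lemma in Remark \ref{ultra} and Lemma \ref{ultra3}, to \emph{coherent} choices. In the latter case one needs a short extra argument you have omitted: if two distinct blocks $X_i=\alpha(Q')$ and $X_j=\alpha(Q)$ of a partition $Q$ both lay in the image of a coherent $\alpha$, then the common refinement $Q''$ of $Q$ and $Q'$ (still fewer than $\lambda$ blocks, since $\lambda$ is infinite) satisfies $\emptyset\neq\alpha(Q'')\subseteq X_i\cap X_j$ by coherence, contradicting disjointness; so the image of a coherent $\alpha$ does have the exactly-one property and your construction then applies. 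With that restriction and two-line supplement, your proof is a correct and complete proof of the lemma.
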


\begin{remark}\label{ultra}
\cite{GH} also shows that the unique choice corresponding to a $\lambda$-complete ultrafilter $\cu$ is \textit{coherent}, i.e., 
 if $Q_2$ is coarser than $Q_1$ then the chosen member of $Q_2$ contains that of $Q_1$.
\end{remark}

Let 
$$U_\lambda(X)$$ 
denote the set of all $\lambda$-complete ultrafilters on a set $X$.

For every, possibly finite, $\lambda$ denote by
$$
\Set_\lambda
$$
the full subcategory of $\Set$ consisting of sets of cardinality less that $\lambda$. 

\begin{lemma}\label{ultra3}
For every set $X$, $U_\lambda(X)$ is the limit of the canonical diagram formed by all $\lambda$-partitions of $X$.
\end{lemma}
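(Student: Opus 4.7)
The plan is to unpack the canonical diagram of $\lambda$-partitions of $X$, recognise elements of its limit as coherent choice functions in the sense of Remark~\ref{ultra}, and then invoke Lemma~\ref{ultra0} to identify them with $\lambda$-complete ultrafilters on $X$.

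First I would describe the canonical diagram explicitly. Its indexing category is the poset $\Pi_\lambda(X)$ of $\lambda$-partitions of $X$ ordered by refinement (finer below coarser); the functor $D_X$ sends a partition $P=(X_i)_{i\in n}$ to its set of blocks and sends each refinement $P\le P'$ to the function assigning to every block of $P$ the unique block of $P'$ containing it. From the pointwise formula for limits in $\Set$, an element of $\lim D_X$ is then a family $(\alpha_P)$ with $\alpha_P\in P$ and $\alpha_P\subseteq\alpha_{P'}$ whenever $P$ refines $P'$, which is precisely a coherent choice function.

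Next I would construct the bijection with $U_\lambda(X)$. In one direction, Lemma~\ref{ultra0} assigns to $\cu\in U_\lambda(X)$ the family whose value at $P$ is the unique block of $P$ belonging to $\cu$, and Remark~\ref{ultra} says this family is coherent. Conversely, given a coherent $(\alpha_P)$, I would define $\cu=\{A\subseteq X\mid \alpha_{\{A,X\setminus A\}}=A\}$, with the degenerate cases $A=X$ included and $A=\emptyset$ excluded by convention. The main obstacle is to check, via Lemma~\ref{ultra0}, that this $\cu$ is a $\lambda$-complete ultrafilter; since every $\lambda$-partition $P=(X_i)_{i\in n}$ refines each two-block partition $\{X_i,X\setminus X_i\}$, coherence gives $\alpha_P\subseteq\alpha_{\{X_i,X\setminus X_i\}}$, forcing $\alpha_P\in\cu$ and excluding the other blocks, so $\cu$ contains a unique member of $P$. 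Mutual inverseness and naturality of the two passages then reduce to the identity $\cu\cap P=\alpha_P$, which is immediate from the constructions.
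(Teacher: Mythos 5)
Your proposal is correct and follows essentially the same route as the paper: identify the elements of the limit with coherent choice functions on $\lambda$-partitions, then apply Lemma~\ref{ultra0} together with Remark~\ref{ultra}. The only divergence is cosmetic: the paper realizes the canonical diagram as $D_X\colon X/\Set_\lambda\to\Set$ indexed by \emph{all} maps into small sets (the form it needs later for Corollary~\ref{isbell}), whose limit agrees with that of your refinement poset since compatibility forces each chosen element into the image of $f$ and the quotient maps form an initial subdiagram; moreover, your hand-construction of the ultrafilter from a coherent family merely re-proves the surjectivity already contained in the bijectivity assertion of Lemma~\ref{ultra0}.
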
 
\begin{proof}
Consider the diagram $D_X: X/\Set_\lambda \to \Set$ assigning to every object of $X/\Set_\lambda$ its codomain.
Every mapping $f:X\to Z$ with $|Z|<\lambda$ induces a $\lambda$-partition $Q_f$ of $X$. Another such mapping $f_2:X\to Z_2$
factorizes through $f_1:X\to Z_1$ if and only if $Q_{f_2}$ is coarser than $Q_{f_1}$. Thus, the limit of the  diagram $D_X$ consists of coherent choices of elements of $\lambda$-partitions $Q_f$. Hence, our lemma follows from \ref{ultra0} and \ref{ultra}. 
\end{proof}

\begin{corollary}[Isbell \cite{I}]\label{isbell}
Let $\lambda$ be an infinite cardinal. Then $\Set^{\op}_\lambda$ is dense in $\Set^{\op}$ iff every $\lambda$-complete ultrafilter is principal. 
\end{corollary}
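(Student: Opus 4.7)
The plan is to unpack the meaning of density in $\Set^{\op}$ and reduce it to a statement about $U_\lambda$ via Lemma \ref{ultra3}.

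First I would translate what density of $\Set^{\op}_\lambda$ in $\Set^{\op}$ means back into $\Set$. A dense subcategory demands that each object $X$ be the canonical colimit of objects of the subcategory; translating from $\Set^{\op}$ to $\Set$ reverses arrows and turns colimits into limits, so the condition becomes: for every set $X$, the canonical cone from $X$ to the diagram $D_X : X/\Set_\lambda \to \Set$ (sending $f : X \to G$ to its codomain $G$) is a limit cone in $\Set$. This is exactly the diagram whose limit is computed by Lemma \ref{ultra3}.

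Next I would use Lemma \ref{ultra3} to identify the limit with $U_\lambda(X)$ and extract the canonical map. By the universal property of the limit, the canonical cone determines a unique function
\[
\eta_X : X \longrightarrow U_\lambda(X),
\]
and by chasing the identification of Lemma \ref{ultra3} (where elements of $U_\lambda(X)$ correspond to coherent choices of blocks in all $\lambda$-partitions), $\eta_X$ must send $x \in X$ to the coherent choice selecting, in each $\lambda$-partition of $X$, the unique block containing $x$. This is precisely the principal ultrafilter at $x$. Therefore $\Set^{\op}_\lambda$ is dense in $\Set^{\op}$ if and only if $\eta_X$ is a bijection for every set $X$.

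Finally I would dispose of the two directions. Injectivity of $\eta_X$ is automatic for any infinite $\lambda$: if $x \neq y$, the $2$-partition $\{\{x\}, X\setminus\{x\}\}$ separates their principal ultrafilters. Surjectivity of $\eta_X$ for every $X$ is exactly the statement that every $\lambda$-complete ultrafilter (on any set) is principal. Combining these gives the equivalence, completing the proof. There is no serious obstacle here: all the technical content has already been packaged into Lemma \ref{ultra0}, Remark \ref{ultra}, and Lemma \ref{ultra3}, and the only thing left is to recognise the canonical cone as the principal-ultrafilter map.
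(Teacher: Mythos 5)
Your proposal is correct and follows essentially the same route as the paper: the paper's one-line proof likewise identifies the comparison map from $X$ to the limit $U_\lambda(X)$ given by Lemma \ref{ultra3} as the principal-ultrafilter map, and concludes that codensity of $\Set_\lambda$ in $\Set$ holds iff every $\lambda$-complete ultrafilter is principal. You merely spell out the steps (the dualization, the bijectivity criterion for the canonical cone, and the automatic injectivity) that the paper leaves implicit.
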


Indeed, the factorizing mapping from $X$ to the limit of the canonical diagram of all $\lambda$-partitions sends an element $x$
to the principal ultrafilter generated by $x$. Thus $\Set_\lambda$ is limit-dense in $\Set$ iff every $\lambda$-complete ultrafilter is principal.

\begin{corollary}[Isbell \cite{I}]\label{isbell1}
$\Set^{\op}$ is bounded iff (M) holds. 
\end{corollary}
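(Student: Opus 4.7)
The plan is to prove both directions by reducing to the characterization in Corollary \ref{isbell}.

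For the easier ($\Leftarrow$) direction, I would simply invoke Corollary \ref{isbell}. Assume (M) holds and pick a cardinal $\lambda$ such that every $\lambda$-complete ultrafilter on any set is principal. Then $\Set_\lambda^{\op}$ is a small, full, dense subcategory of $\Set^{\op}$, so $\Set^{\op}$ is bounded by definition.

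For the ($\Rightarrow$) direction, suppose $\Set^{\op}$ has a small, full, dense subcategory $\cg$. Since $\cg$ is small, there is an infinite cardinal $\lambda$ strictly greater than the cardinality of every object of $\cg$ (viewed as a set). Then $\cg$ is a full subcategory of $\Set_\lambda^{\op}$. The key observation I would use is the standard fact that if a full subcategory $\cg$ of a category is dense, then any full subcategory $\cg'$ containing $\cg$ is again dense: given an object $X$, the inclusion of $\cg/X$ into $\cg'/X$ is a final functor (each object of $\cg'/X$ admits a morphism from an object of $\cg/X$, by density of $\cg$, in a connected way), so the canonical cocone on $\cg'/X$ is a colimit cocone whenever the one on $\cg/X$ is. Applying this with $\cg' = \Set_\lambda^{\op}$ shows $\Set_\lambda^{\op}$ is dense in $\Set^{\op}$, and then Corollary \ref{isbell} delivers (M) for this $\lambda$.

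The only genuinely subtle point is the passage from density of $\cg$ to density of the larger $\Set_\lambda^{\op}$; everything else is a direct citation of the preceding corollary. This is a standard and harmless manoeuvre, so I expect the proof to be only a few lines long, with the bulk of the mathematical content already carried by Lemma \ref{ultra3} and Corollary \ref{isbell}.
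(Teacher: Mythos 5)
Your reduction is the same as the paper's: both directions go through Corollary \ref{isbell}, and for the forward direction you enlarge the given dense subcategory $\cg$ of $\Set^{\op}$ to $\Set^{\op}_\lambda$ with $\lambda$ exceeding the cardinality of every object of $\cg$. The enlargement fact you invoke --- a full subcategory containing a dense subcategory is dense --- is true, and it is exactly the step the paper isolates (phrased there as cofinality of the canonical diagram over $\cg$ in the one over $\Set_\lambda$). But your justification of it is incorrect. Finality of the inclusion $\cg/X\hookrightarrow\cg'/X$ requires, for each object $g'\colon G'\to X$ of $\cg'/X$, that the category of morphisms from $g'$ \emph{to} objects of $\cg/X$, i.e.\ of factorizations of $g'$ through objects of $\cg$, be nonempty and connected; your condition ``admits a morphism \emph{from} an object of $\cg/X$'' has the arrows backwards, and in either orientation the inclusion simply fails to be final in general. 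Since your claim is about arbitrary categories, a counterexample in $\Set$ suffices: $\cg=\{1\}$ is dense in $\Set$ and $\cg'=\{1,2\}$ contains it, but a non-constant map $g'\colon 2\to X$ factors through no map $1\to X$, so the relevant comma category is empty --- even though $\cg'$ is of course dense. Density of $\cg$ produces morphisms from $\cg$-objects \emph{into} a given object, which is not what a factorization/finality argument needs, so this step of your proof would fail as written.

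The step is easily repaired without finality. If $\cg\subseteq\cg'$ are full and $\cg$ is dense, then for each $G'\in\cg'$ the canonical cocone $(h\colon G\to G')_{h\in\cg/G'}$ is a colimit cocone, hence jointly epic. Now given any cocone $(\phi_{g'})_{g'\in\cg'/X}$ with vertex $Y$ on the canonical diagram of $\cg'$ over $X$, its restriction to $\cg/X$ is induced by a unique $u\colon X\to Y$ with $\phi_g=u\cdot g$ for $g\in\cg/X$; then for $g'\colon G'\to X$ and every $h\colon G\to G'$ with $G\in\cg$ one has $\phi_{g'}\cdot h=\phi_{g'\cdot h}=u\cdot g'\cdot h$, and joint epimorphicity gives $\phi_{g'}=u\cdot g'$. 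Hence $\cg'$ is dense. Alternatively, you can stay closer to the paper: choosing $\lambda$ \emph{least} among infinite cardinals with $\lambda>|G|$ for all $G\in\cg$, every map $X\to Z$ in $\Set$ with $|Z|<\lambda$ factors through its image, which embeds into some $G\in\cg$; with this choice the canonical diagram over $\cg$ really is cofinal in the one over $\Set_\lambda$, which is the paper's route (and since (M) only asserts the existence of \emph{some} $\lambda$, the minimal choice costs nothing). With either repair, the rest of your argument --- citing Corollary \ref{isbell} in both directions --- coincides with the paper's proof.
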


Indeed, suppose $\Set$ has a codense subcategory $\cg$ and $\lambda$ is an infinite cardinal larger than $|G|$ for each $G\in\cg$. Then $\cg$ is cofinal in 
$\Set_\lambda$ and thus the above diagram $D_X:X/\cg\to\Set$ is cofinal in the corresponding diagram w.r.t.\ $\Set_\lambda$. Thus $\Set_\lambda$
is codense in $\Set$, and our result follows from \ref{isbell}.
 
 \begin{remark}\label{monad}
 	
 	(a) Recall that the ultrafilter functor $U:\Set\to\Set$ assigns to every set $X$ the set $U(X)$ of all
 	ultrafilters on it and to every mapping $f:X\to Y$ the mapping
 	$$
 	U f: \cu\mapsto \{A\subseteq Y; f^{-1}(A)\in\cu\}.
 	$$
 	This functor yields a monad $\du$ with the unit $\eta_X:X\to U(X)$ given by principal ultrafilters.
 	Indeed, $U$ carries a unique structure of a monad, as proved by B\"orger \cite{B}. This is based on the fact that $U$
 	is terminal in the category of all set functors preseving finite coproducts.
 	
 	The subfunctor $U_\lambda$ of $U$ of all $\lambda$-complete ultrafilters also carries a unique structure of a monad.
 	B\"orger's proof is easily adapted: $U_\lambda$ is terminal in the category of all set functors preserving coproducts
 	of size smaller than $\lambda$. We thus obtain a submonad $\du_\lambda$ of $\du$.
 	
 	(b) Recall further the concept of the \textit{codensity monad} of a small, full subcategory $\cg$ of a complete catory $\ck$: this is the
 	monad given by the left Kan extension of the embedding $\cg \hookrightarrow \ck$ over itself. Explicitly,  this is the monad $(T, \mu,\eta)$
 	where $T$ assigns to an object $K$  the limit of the diagram $D_K: K/\cg \to \ck$ assigning to every object $g:K \to G$ its codomain (with a limit cone $\pi_g: TK \to G$). To a morphism $k:K \to L$ this functor assigns the unique morphism $Tk$ with 
 	$$
 	\pi_g.Tk=\pi_{k.g}
 	$$
 	for all $g:L \to G$ in $L/\cg$. The monad unit has components $\eta_k$ determined by $\pi_g .\eta_k=id$ for all $g \in K/\cg$.
 \end{remark}

\begin{corollary}\label{monad1}
The monad $\du_\lambda$ is the codensity monad of the embedding
$$
\Set_\lambda \hookrightarrow \Set.
$$
\end{corollary}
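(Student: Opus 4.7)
The plan is to use the explicit pointwise description of the codensity monad given in Remark \ref{monad}(b) together with Lemma \ref{ultra3} to identify the underlying functor of the codensity monad with $U_\lambda$, and then invoke B\"orger's uniqueness theorem (in the form adapted in Remark \ref{monad}(a)) to conclude that the monad structures must agree.

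First, by the formula in Remark \ref{monad}(b), the codensity monad $T$ of $\Set_\lambda \hookrightarrow \Set$ assigns to each set $X$ the limit $TX = \lim D_X$, where $D_X \colon X/\Set_\lambda \to \Set$ sends $f\colon X \to Z$ to its codomain $Z$. But this is precisely the diagram whose limit is computed in Lemma \ref{ultra3}, yielding a canonical bijection $TX \cong U_\lambda(X)$. Naturality in $X$ follows from the functoriality of the limit construction; under the identification provided by Lemma \ref{ultra3} it corresponds to the usual push-forward of ultrafilters along maps, which is exactly the action of the functor $U_\lambda$. The unit $\eta_X \colon X \to TX$ of the codensity monad is characterized by $\pi_f \circ \eta_X = f$ for every $f \in X/\Set_\lambda$, hence sends $x \in X$ to the ultrafilter whose choice $\alpha(Q_f)$ on each $\lambda$-partition $Q_f$ induced by $f$ is the unique block containing $x$, i.e.\ to the principal ultrafilter at $x$. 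This matches the unit of $\du_\lambda$.

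Second, to invoke B\"orger's uniqueness theorem, I would verify that $U_\lambda$ preserves coproducts of size less than $\lambda$: given a family $(X_i)_{i \in I}$ with $|I| < \lambda$, the inclusions $X_i \hookrightarrow \coprod_j X_j$ form a $\lambda$-partition of the coproduct, so by Lemma \ref{ultra0} every $\lambda$-complete ultrafilter on $\coprod_j X_j$ concentrates on a unique $X_i$ and restricts to a $\lambda$-complete ultrafilter there, while conversely every $\lambda$-complete ultrafilter on some $X_i$ extends uniquely back. Thus $U_\lambda(\coprod_i X_i) \cong \coprod_i U_\lambda(X_i)$ naturally, and by the uniqueness clause of B\"orger's theorem recalled in Remark \ref{monad}(a), $\du_\lambda$ is the unique monad structure on the functor $U_\lambda$. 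Since the codensity construction supplies another monad structure on the same underlying functor, the two coincide. The only substantive step is really the functorial identification $T \cong U_\lambda$, which in turn reduces via Lemma \ref{ultra3} to the combinatorial Lemma \ref{ultra0}; after that the argument is formal, the one remaining verification being the $<\lambda$-coproduct preservation needed to apply B\"orger.
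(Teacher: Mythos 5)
Your proof is correct and follows essentially the same route as the paper: identify the underlying functor of the codensity monad with $U_\lambda$ via the pointwise limit formula of Remark \ref{monad}(b) and Lemma \ref{ultra3}, then conclude by the uniqueness of the monad structure on $U_\lambda$ from Remark \ref{monad}(a). Your additional verifications (naturality, the unit computation, and the preservation of coproducts of size less than $\lambda$) are exactly the details the paper leaves implicit in those two remarks, so nothing is missing or superfluous in substance.
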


Indeed, the formula for the codensity monad in (b) above demonstrates that $T$ agrees with $U_\lambda$. Thus the corollary follows from (a).

\begin{remark}\label{monad2}
The special case $\lambda=\omega$ is the classical result of Kennison and Gildenhuis \cite{KG} that the ultrafilter monad is the codensity monad of the embedding of finite sets into $\Set$. The above proof for this case was presented by Leinster \cite{L}.
\end{remark} 

Surprisingly, $\Set^{\op}$ has a `very small' colimit-dense subcategory:

\begin{prop}\label{colimdense} 
A set of power $3$ is colimit-dense in $\Set^{\op}$.
\end{prop}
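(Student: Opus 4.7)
The plan is to show that every set $X$ is the limit in $\Set$ of some small diagram $D\colon\mathcal{I}\to\Set$ in which every vertex takes value $\{0,1,2\}$, whence $X$ is the colimit in $\Set^{\op}$ of a diagram in the 3-element set.

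The natural starting point is the canonical comma-category diagram
\[
D_X\colon X/\{3\}\to\Set,\qquad (f\colon X\to 3)\mapsto 3,\qquad (h\colon f_1\to f_2)\mapsto h,
\]
whose morphisms $f_1\to f_2$ are the maps $h\colon 3\to 3$ with $hf_1=f_2$. By the same argument that gives Lemma \ref{ultra3} (via the encoding of a map $X\to 3$ as a $4$-partition of $X$), the limit of $D_X$ in $\Set$ is the set $U(X)$ of all ultrafilters on $X$, and the canonical map $X\to\lim D_X$ is the principal-ultrafilter embedding $x\mapsto \ee_x$. When $X$ is finite, every ultrafilter is principal, so $\lim D_X\cong X$ and the claim is established in that case directly.

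For infinite $X$, however, $\lim D_X=U(X)$ contains many non-principal ultrafilters and is strictly larger than $X$. My plan is to enlarge the canonical diagram with additional 3-element vertices and additional morphisms $3\to 3$ that together impose new ``cross-term'' constraints, cutting the limit back down to the set of principal ultrafilters. The model for the kind of construction I have in mind is the following: a pair of vertices $v_1,v_2$ each sent to $3$, equipped with maps $\phi_i\colon v_i\to w$ into an auxiliary vertex $w$ (also sent to $3$), impose in the limit exactly the equation $\phi_1(t_{v_1})=\phi_2(t_{v_2})$, a genuinely non-product cross-term relation between $t_{v_1}$ and $t_{v_2}$. The plan is to use many such merge vertices, indexed systematically by elements and pairs of elements of $X$, to carve $U(X)$ down to $X$.

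The main obstacle will be to design these extra vertices and morphisms so that a compatible family on the augmented diagram is exactly one of the form $f\mapsto f(x)$ for some $x\in X$. Any limit condition is universal/conjunctive in character, whereas ``$\cu$ is principal'' says $\bigcap_{A\in\cu}A\neq\emptyset$, which is existential: there should exist some witness $x$ with $\{x\}\in\cu$. Converting this existential statement into a purely universal one using only endomaps of $3$ is where the third element of $\{0,1,2\}$ becomes crucial; Remark \ref{colimdense1} shows that the $4$ endomaps of the 2-element set do not suffice, while the $27$ endomaps of the 3-element set give enough ``routing'' through merge vertices to do the job. Once such an augmented diagram is written down, the verification that its limit is precisely $X$ should follow by a direct inspection of principal versus non-principal ultrafilters.
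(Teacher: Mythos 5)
Your proposal stops exactly where the proof would have to begin: the ``augmented diagram'' that is supposed to carve $U(X)$ down to $X$ is never constructed, and the closing sentence (``should follow by a direct inspection'') defers the entire mathematical content of the proposition. Worse, the strategy as stated --- keep the canonical diagram $D_X\colon X/\{3\}\to\Set$ and \emph{add} vertices and morphisms to it --- cannot be completed at all. Adding objects and morphisms to a diagram only adds constraints, and a compactness argument shows these constraints can never separate principal from non-principal ultrafilters: suppose $D'$ extends $D_X$ by extra $3$-element vertices, and suppose every principal ultrafilter extends to a compatible family on $D'$ (as it must, if $X$ is to be the limit). Given an arbitrary ultrafilter $\cu$, the extension problem is a constraint-satisfaction problem over the extra vertices, each with the finite value set $\{0,1,2\}$; since each connecting morphism constrains only two vertices, Tychonoff compactness reduces solvability to solvability of every finite subsystem. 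A finite subsystem touches only finitely many canonical vertices $f_1,\dots,f_n\colon X\to 3$, and the block $\bigcap_i f_i^{-1}\bigl(t_{f_i}\bigr)\in\cu$ is nonempty, so some principal $\ee_x$ assigns the same values $t_{f_i}$ to those vertices; its extension solves the subsystem. Hence \emph{every} $\cu\in U(X)$ extends, the restriction $\lim D'\to U(X)$ is surjective, and $\lim D'$ has at least $2^{2^{|X|}}>|X|$ elements. Your own observation that principality is existential while limit conditions are universal is precisely this obstruction; it is fatal to the plan, not merely a difficulty to be routed around with more endomaps of $3$.

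The paper's proof succeeds by doing the opposite of augmenting: it discards the canonical diagram entirely and uses a sparse bespoke one. Fix $t\in X$ and $s\notin X$; the vertices are the sets $K_x=\{t,x,s\}$ for $x\in X\setminus\{t\}$ and the three-element subsets $Y=\{t,x,x'\}$ of $X$, with connecting maps $f_{Y,x}\colon Y\to K_x$ (sending $x$ to $x$, the rest to $t$) and one endomap $p_x$ of $K_x$ fixing $t$ and $x$ but not $s$. Each $K_x$ records only the binary datum ``the point is $x$ or not'' ($p_x$ rules out the value $s$), the $Y$-vertices force that at most one $x$ is affirmed, and --- this is the key move your plan lacks --- the one compatible family with no affirmed $x$ (all components equal to $t$) is not excluded but \emph{identified} with the principal point $t$. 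So the would-be non-principal families collapse onto an honest element of $X$ rather than having to be killed by extra constraints, which, as above, is impossible. (The paper also treats $|X|\le 2$ separately, via equalizers of endomaps of $\{0,1,2\}$; your finite case via the canonical diagram is fine.)
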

\begin{proof}
Every set of power at most $2$ can be expressed by using an equalizer of two endomaps of $\{0,1,2\}$.
	
	For every set $X$ of power at least $3$ we present a diagram $D$
in $\Set$ whose objects have power $3$ and whose limit is $X$.
Choose elements $t$ in $X$ and $s$ outside of $X$, and for every element $x \neq t$ of $X$ put $K_x = \{t,x,s\}$.
 Given a subset $Y$ of $X$ and an element $x \in Y$, denote by $f_{Y,x}: Y \to K_x $ the function mapping $x$ to itself and the rest to $t$. For every element $x \in X \setminus\{t\}$ let $p_x$ be an endomap of $K_x $ whose fixed points are  $x$ and $t$ but not $s$. 

 Objects of $D$ are all the above sets $K_x$ and all three-element subsets $Y = \{t,x,x'\}$ of $X$. The only connecting morphisms are $f_{Y,x}:Y \to K_x$  for all $Y = \{t,x,x'\}$ and all $p_x$ above.
We have a cone of $D$ consisting of the functions $f_{X,x}:X \to K_x$ and $g_Y: X \to Y$, mapping $x$ to $x$ if $x \in Y$, else to $t$. This is a limit cone. 

Indeed, it is sufficient to verify that given a compatible choice of elements of all objects of $D$:
$$
k_x \in K_x \text{  and  }  l_Y \in Y,
$$
there exists a unique $v \in X$ such that
$$
k_v =v \text{  and  } l_Y=v \text{  for all  } Y=\{t,x,v\},
$$
 while choosing $t$ in all other cases. Observe that due to the connecting
map $p_x$ we have $k_x \neq s$ for every $x$.

(a) In case that there exists $x \in X\setminus\{t\}$ with $k_x =x$, we put
$v=x$. If $Y$ contains $v$, use the fact that $f_{Y,v}(l_Y)=k_v=v$ to conclude
$l_Y=v$. If $z$ is distinct from $v$, then $k_z=t$: use $f_{Z,z}$ for $Z=\{t,v,z\}$.
And we also have $l_Y=t$ if $Y$ does not contain $v$: choose $z \in Y\setminus \{t\}$ and use
$f_{Y,z}$.

	(b) In case that $k_x=t$ for all $x$, put $v=t$. We have $l_Y=t$ for
every $Y$: use $f_{Y,x}(l_Y)= k_x$ for $x\in Y\setminus\{t\}$.

Unicity is clear: if $v$, $v'$ are distinct in $X\setminus\{t\}$, then $l_Y$ for $Y=\{t,v,v'\}$ demonstrates that they do not both have the above property.
\end{proof}

\begin{remark}\label{colimdense1}
(a) In contrast, sets of power at most $2$ (that is, $\Set_3$) do not form a colimit-dense subcategory of $\Set^{\op}$. Indeed, a non-empty limit of any diagram $D: \cd \to \Set_3$ in $\Set$ always has cardinality which is a power of $2$. To see this, let $\cd_1$ be the small category obtained by adjoining to $\cd$ the formal inverses of all morphisms $u$ in $\cd$ for which $Du$ is an isomorphism. Then the diagram $D$ can be extended to a diagram $D_1:\cd_1\to\Set_3$ in the obvious way, and the limit of $D_1$ is the same as that of $D$. Let $\cd_0$ be the full subcategory of $\cd_1$ on objects $d$ for which there exists no morphism $u:d' \to d$ in $\cd_1$ such that $D_1(u)$ is a constant mapping. If the limit of $D$ is not empty, then the domain restriction $D_0: \cd_0 \to \Set$ of the diagram $D_1$ has the same limit as $D_1$ and $D$. All connecting morphisms of $D_0$ are isomorphisms between $2$-element sets. Thus every connected component of $\cd_0$ yields a subdiagram with a limit which is either empty or a two-element set. If $\cd_0$ has a connected component with empty limit, then the limit of $D$ is empty. Otherwise, $\cd_0$ has $k$ connected components with nonempty limits, thus, the limit of $D$ has cardinality $2^k$.

(b) $\Set_3$ is colimit-dense in the full subcategory $\ck$ of $\Set^{\op}$ on sets that have cardinality $2^k$ or $0$. Assuming 
$\neg$(M), $\ck$ is not bounded. Indeed, it is clear that $\Set$ is the idempotent completion of $\ck$: every
nonempty set $X$ is a retract of $2^X$. And if an idempotent completion $\ca$ of a category $\ck$ is not bounded, then $\ck$ is also not bounded. (Suppose, to the contrary, that $\cl$ is a small dense subcategory of $\ck$. Then we have a canonical full embedding of $\ck$
to the category of presheaves on $\cl$. It follows that also $\ca$ canonically embeds into that presheaf category,
thus $\cl$ is dense in $\ca$, a contradiction.)
\end{remark}

\begin{conclusion}\label{colimdense2}
Assuming $\neg$(M), $\Set^{\op}$ has a colimit-dense subcategory but not a dense one. Assuming (M), Vop\v enka's principle does not
hold and \ref{ex2} yields a cocomplete category having a colimit-dense subcategory but not a dense one.
\end{conclusion}

\section{Vector spaces}
We now turn to the category $\Vec$ of vector spaces over a given field $K$. There are numerous analogies to $\Set$, but there are also differences.
Let us start with the latter: Whereas, as we have seen, finite sets are colimit-dense in $\Set^{\op}$, finite-dimensional spaces are not colimit-dense
in $\Vec^{\op}$. To verify this,
recall that the dualization functor $(-)^\ast \colon \Vec \to ( \Vec)^{\op}$ given by $X^* = [X,K]$ (the space of all linear forms)  is left adjoint to its opposite, and this leads  to a monad on $\Vec$,
$$
\big( (-)^{\ast\ast}, \eta, \mu\big)
$$
called the \textit {double-dualization monad}. It assigns to every space $X$ its double-dual $X^{**}= [X^* , K]$ and to a
morphism $f\colon X\to Y$ the morphism $f^{\ast\ast} \colon X^{\ast\ast}\to Y^{\ast\ast}$ which takes $x\in X^{\ast\ast}$ to the linear map
$$
f^{**}(x) \colon Y^\ast \to K\,, \quad u\mapsto x(u\cdot f)\quad \mbox{for all }\ \ u\colon Y\to K\,.
$$
The unit has components
$$
\eta_X : X \to X^{\ast\ast} \,,\quad x\mapsto \operatorname{ev}_x
$$
where $\operatorname{ev}_x$ evaluates each $u\colon X\to K$ at $x$. We thus call the vectors of $X^{\ast\ast}$ of the form $\eta (x)$ the \textit{evaluation vectors}.

\begin{example}\label{LP}
A vector space $A$ of dimension $\aleph_0$ is not a limit of a diagram of finite-dimensional spaces in $\Vec$. This follows from the fact that $A$ is not isomorphic to the dual of any space
	$Y$ (since if $Y$ has dimension $n\ge\aleph_0$, then $Y^*$ has dimension $|K|^n$). Given a diagram $D: \cd \to \Vec$, let $D^{**}: \cd
	 \to \Vec$ denote the 
	composite of $D$ and $(-)^{**}$. If the objects of $D$ are finite-dimensional, we see that $D^{**}$ is naturally isomorphic to $D$. Therefore, the limit $lim D$ in $\Vec$ is the dual space to $colim D^*$, since $(-)^*$ takes colimits to limits. Thus that limit is not 
	isomorphic to $A$.
\end{example}

\begin{prop}\label{LP1}
All vector spaces of countable dimension form a colimit-dense subcategory of $\Vec^{\op}$.
\end{prop}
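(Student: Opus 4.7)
The plan is to exhibit, for every vector space $X$, an explicit diagram of countable-dimensional spaces whose limit in $\Vec$ (equivalently, whose colimit in $\Vec^{\op}$) is $X$. If $\dim X \le \aleph_0$ the one-object diagram on $X$ itself suffices, so I will concentrate on the case $\dim X > \aleph_0$: fix a basis $\{e_i\}_{i \in I}$ and identify $X$ with $K^{(I)}$.

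The diagram $D$ will be indexed by the directed poset $P$ of countable subsets $S \subseteq I$ ordered by inclusion. To each $S \in P$ I attach the countable-dimensional subspace $V_S = \operatorname{span}(e_i : i \in S) \subseteq X$, and to each inclusion $S \subseteq S'$ the projection $V_{S'} \twoheadrightarrow V_S$ killing the coordinates indexed by $S' \setminus S$. This yields a cofiltered diagram $D : P^{\op} \to \Vec$ of countable-dimensional spaces, and the obvious projections $\pi_S : X \twoheadrightarrow V_S$ form a cone over it. The main task will be to show that this cone is a limit cone in $\Vec$, for then $X$ is a colimit of countable-dimensional objects in $\Vec^{\op}$, as required.

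Injectivity of the canonical map $X \to \lim D$ is immediate, since even the singleton projections $\pi_{\{i\}}$ already read off coordinates. The substance lies in surjectivity: given a compatible family $(x_S \in V_S)_{S \in P}$, set $T_S := \operatorname{supp}(x_S) \subseteq S$ and note that compatibility forces $T_{S'} \cap S = T_S$ whenever $S \subseteq S'$. I expect the main obstacle to be the claim that $T := \bigcup_{S \in P} T_S$ is already \emph{finite}. The plan for proving this is by contradiction: if $T$ contained a countably infinite subset $\{b_1, b_2, \ldots\}$, one takes $S = \{b_1, b_2, \ldots\} \in P$, picks for each $k$ some $S_k \in P$ with $b_k \in T_{S_k}$, and applies the compatibility relations across $S_k \subseteq S \cup S_k$ and $S \subseteq S \cup S_k$ to conclude that $b_k \in T_S$ for every $k$, contradicting the finiteness of $T_S$.

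Once finiteness of $T$ is established the proof wraps up quickly. I will define $x \in K^{(I)}$ by $x(i) := x_T(i)$ for $i \in T$ and $x(i) := 0$ otherwise, and verify $\pi_S(x) = x_S$ for every $S \in P$ by factoring the compatibility relations through the intermediate set $S \cap T$, which lies inside both $S$ and $T$: the same argument that produced finiteness of $T$ will in fact yield $T_S = T \cap S$, so that the supports and values of $x_S$ and $\pi_S(x)$ agree on $S$. Uniqueness of $x$ is immediate from injectivity of the canonical map. This exhibits $X$ as a limit in $\Vec$ of the countable-dimensional diagram $D$, completing the plan.
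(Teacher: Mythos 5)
Your proof is correct, and it takes a recognizably different route from the paper's, although both hinge on the same pivotal observation. The paper's diagram consists only of the spans of singletons and of countably infinite subsets of a basis (so its objects have dimension $1$ or $\aleph_0$), with the sole connecting maps being the one-coordinate projections $f_{Y,x}\colon L_Y\to K_x$; it then verifies the universal property directly against an arbitrary cone, via the induced map into $K^n$. You instead take the full codirected inverse system of the spans $V_S$ over \emph{all} countable subsets $S$ of a basis, with projections as transition maps, and compute the limit elementwise as the space of compatible families (legitimate, since limits in $\Vec$ are created by the forgetful functor to $\Set$). The crux is identical in both arguments: finite support is detected on countable subsets, proved by the same contradiction that extracts a countably infinite subset of a putatively infinite support — your $S=\{b_1,b_2,\dots\}$ plays exactly the role of the paper's subspace $L_Y$ with $Y$ a countable subset of the support of $h(t)$. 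What your version buys: the diagram is cofiltered, so you in fact prove the slightly stronger statement that every object of $\Vec^{\op}$ is a \emph{filtered} colimit of countable-dimensional spaces, and the limit verification is more mechanical. What the paper's version buys: a much sparser diagram whose objects have dimension only $1$ or $\aleph_0$. One pedantic point in your write-up: for $x(i):=x_T(i)$ to make sense you need the finite set $T$ to lie in your poset $P$, so ``countable'' must mean ``at most countable''; alternatively define $x(i):=x_S(i)$ for any $S\ni i$, which is well-defined by compatibility through unions. This is trivially repaired and does not affect the argument.
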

\begin{proof} For every infinite cardinal $n$, we construct a diagram $D$ in $\Vec$ whose objects have dimesion $1$ or $\aleph_0$ and whose limit is the $n$-dimensional space $V$ of all functions of finite support from $n$ to $K$.
Our diagram $D$ has as objects
\begin{enumerate}        
\item[(a)] the subspace $K_x$ of $V$, for $x \in n$, of all functions whose support is a subset of $\{x\}$, and
\item[(b)] the subspace $L_Y$ of $V$, for every countably infinite subset $Y$ of $n$, of all functions whose support is a subset of $Y$.
\end{enumerate}
For every $Y$ in (b) and every element $x \in Y$ we have the linear map $f_{Y,x}: L_Y \to K_x$ of domain restriction to $\{x\}$. These are precisely all the connecting maps of our diagram. We claim that $V$ is a limit of $D$ w.r.t.\ to the following cone: $g_x: V \to K_x$, domain restriction to $\{x\}$, and $g_Y: V \to L_Y$, domain restriction to $Y$.
It is easy to see that this is indeed a cone of $D$. 

Let another cone be given by a space $T$ and linear maps $h_x: T \to K_x$ and 
$h_Y: T \to L_Y$. Let $h: T \to K^n$ have components $h_x$ ($x \in n$) (using the obvious isomorphism of $K_x$ and $K$). Choose an arbittrary object $L_Y$.  Due to compatibility, we know for every element $x \in Y$ that 

$$
 h_x =f_{Y,x}\cdot h_Y.
$$
Therefore, we get a commutative square as follows
$$
\xymatrix{
T \ar [r]^{h} \ar [d]_{h_Y}& K^n \ar [d]^{p_Y}\\
L_Y\ar [r]_{e_Y}& K^Y
}
$$
where $p_Y$ is the projection and $e_Y$ the canonical embedding. (Indeed, by post-composing this square with the projection of $K^Y$ corresponding to $x \in Y$, we get the equality above.) This implies that for every element $t$ of $T$ the restriction of the function $h(t):n \to K$ to $Y$ has finite support. Since this holds for all countable subsets $Y$ of $n$, we conclude that
$h(t)$ has finite support for every $t \in T$. In other words, $h$ has a codomain restriction $h': T \to V$. This is the
desired factorization of the given cone. Indeed the equality
$$
h_Y = g_Y \cdot h'
$$
follows from the above square. Combined with $h_x = f_{Y,x}\cdot h_Y$ above, this implies
$$
h_x = g_x \cdot h'.
$$
The unicity of $h'$ is obvious.

\end{proof}
 
\begin{theorem}
	$\Vec^{\op}$ is bounded iff (M) holds.
\end{theorem}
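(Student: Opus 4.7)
The plan is to follow Isbell's approach in Corollary~\ref{isbell1}, using a vector-space analogue of the correspondence between coherent families and $\mu$-complete ultrafilters. The key tool is a \emph{local finiteness constraint} on coherent families: for $V=K^{(I)}$ and a coherent family $\xi$ for the canonical diagram $V/\Vec_\mu\to\Vec$, any pointwise locally finite family $(a_\alpha)_{\alpha\in J}$ of linear forms on $V$ with $|J|<\mu$ has $\xi(a_\alpha)=0$ for all but finitely many $\alpha$. This follows from coherence with the map $V\to K^{(J)}$, $v\mapsto(a_\alpha(v))_\alpha$, together with the coordinate projections of $K^{(J)}$: the values $(\xi(a_\alpha))_\alpha$ must assemble into an element of $K^{(J)}$, hence have finite support.

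For the direction $\Vec^{\op}$ bounded $\Rightarrow$ (M), I would fix a dense subcategory $\cg\subseteq\Vec^{\op}$, choose $\mu$ larger than $|K|$ and every dimension appearing in $\cg$ (so $|G|<\mu$ for all $G\in\Vec_\mu$), and suppose for contradiction there is a non-principal $\mu$-complete ultrafilter $\cu$ on some set $I$. On $V=K^{(I)}$ define $\xi_f:=\lim_\cu f(e_i)$ for each $f\colon V\to G$ with $G\in\Vec_\mu$; this is well-defined by $\mu$-completeness of $\cu$ and coherent because $\cu$-limits commute with linear maps. To see $\xi$ is not an evaluation at any $v\in V$, take $f\colon V\to K^2$ sending $e_i$ to $b_0$ for $i\in\mathrm{supp}(v)$ and to $b_1$ otherwise: since $I\setminus\mathrm{supp}(v)\in\cu$, $\xi_f=b_1$, while $f(v)$ is a scalar multiple of $b_0$. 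Density of $\cg$ would force $\xi$ to be an evaluation, a contradiction; hence (M) holds for $\mu$.

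For the converse, assuming (M) for some $\lambda$, I would pick $\mu\geq\lambda$ with $\mu>|K|$ and show $\Vec_\mu^{\op}$ is dense in $\Vec^{\op}$. Given a coherent family $\xi$ on $V=K^{(I)}$, local finiteness applied to each sub-family $(\phi_i)_{i\in J}$ of coordinate functionals with $|J|<\mu$ shows $F:=\{i\in I:\xi(\phi_i)\neq0\}$ is finite. Setting $y:=\sum_{i\in F}\xi(\phi_i)\,e_i$ and $\xi':=\xi-\eta(y)$ reduces to the case where $\xi'$ vanishes on every coordinate functional and we must prove $\xi'=0$. If $\xi'\neq0$, partitioning $I$ by the values of some linear form $\psi$ with $\xi'(\psi)\neq0$ and applying local finiteness to the locally finite family $\{k\,\chi_{S_k}\}_{k\in K}$ produces a subset $S\subseteq I$ with $\xi'(\chi_S)\neq0$.

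The main obstacle is then to convert $(\xi',S)$ into a non-principal $\mu$-complete ultrafilter on $I$, contradicting (M). My plan is a two-stage Zorn argument. First, produce an \emph{atomic} subset $S^*\subseteq S$, meaning $\xi'(\chi_T)\in\{0,\xi'(\chi_{S^*})\}$ for every $T\subseteq S^*$: repeated binary splittings $S^*_n=T\sqcup(S^*_n\setminus T)$ with both parts having nonzero $\xi'$-value cannot continue indefinitely, as the disjoint differences assemble into a countable locally finite family whose values are all nonzero, contradicting local finiteness; K\"onig's lemma then supplies an atomic leaf. Second, refine $S^*$ to be \emph{super-atomic}, meaning every $\mu$-partition of $I$ meets $S^*$ in exactly one cell with nonzero $\xi'$-value. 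A Zorn argument among atomic refinements works: along a descending chain of atomic subsets, the $\xi'$-value is constant (by atomicity) and the intersection is still atomic with the same value (using local finiteness on cofinal sub-chains of the differences); a minimal atomic element is super-atomic, for otherwise a partition with two selected cells yields a proper atomic refinement by intersecting $S^*$ with one of them. Finally, $\cu:=\{T\subseteq I:\xi'(\chi_{S^*\cap T})\neq0\}$ is the desired non-principal $\mu$-complete ultrafilter (non-principal since $\xi'$ vanishes on singletons), contradicting (M). The subtlety making super-atomicity necessary is that in positive characteristic $p$, mere atomicity allows an odd number $1+p$ of selected cells in a partition, so the second Zorn step is essential.
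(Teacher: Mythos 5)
The paper itself offers no proof of this theorem, only citations (Isbell \cite{I1}, Theorem~8.1 for sufficiency; \cite{R}, Theorem~1.6 for necessity), so your proposal has to stand on its own. Much of it does: in the direction ``bounded $\Rightarrow$ (M)'', the family $\xi_f=\lim_{\cu}f(e_i)$ is well defined by $\mu$-completeness and coherent, and the only gloss is that your witness $f\colon V\to K^2$ need not lie in $V/\cg$, so you need the cofinality/extension argument (as in the paper's final Proposition) to pass from density of $\cg$ to density of $\Vec_\mu$; that is routine. In the converse direction, the local finiteness lemma is correct and genuinely useful, the reduction to a family $\xi'$ vanishing on all coordinate functionals works (note you should take $\mu>\aleph_0$ as well, for the countable-family applications), the extraction of $S$ with $\xi'(\chi_S)\neq0$ is fine, and your stage-1 argument for an \emph{atomic} set is sound: if no nonzero-value subset were atomic, repeated splitting yields countably many disjoint sets of nonzero value, contradicting local finiteness. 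In effect you are working with a nonzero finitely additive, $\mu$-additive $K$-valued measure $m(T)=\xi'(\chi_T)$ vanishing on singletons, which is the right object.

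The genuine gap is the second Zorn stage. Descending chains of atomic sets need \emph{not} have lower bounds, and minimal atomic sets need not exist at all, so Zorn's hypothesis fails. Concretely, consider the measure you are implicitly trying to rule out: $m(T)=1$ if $T\in\cu_1$ and $m(T)=0$ otherwise, where $\cu_1$ is a non-principal $\mu$-complete ultrafilter (over $K=\mathbb{F}_2$, say). Here the atomic sets of nonzero value are exactly the members of $\cu_1$; every such $T$ splits as $T=T'\sqcup T''$ with exactly one part again in $\cu_1$, so no minimal atomic set exists. Worse, well-ordering $I=\{i_\beta:\beta<\kappa\}$ and setting $A_\beta=I\smallsetminus\{i_{\beta'}:\beta'<\beta\}$, the chain $(A_\beta)_{\beta<\beta_0}$ (with $\beta_0$ least such that $A_{\beta_0}\notin\cu_1$) consists of atomic sets of value $1$ whose intersection $A_{\beta_0}$ is hereditarily null, so the chain has no lower bound in your poset; moreover $\mu$-completeness forces $\operatorname{cf}(\beta_0)\geq\mu$, which is exactly where your proposed repair fails, since local finiteness applies only to families indexed by sets of size $<\mu$, and ``all successor differences are null'' gives no control at limit stages because $m$ is only $\mu$-additive. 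You cannot dismiss such measures as nonexistent under (M): their nonexistence is the very thing being proved, so every lemma must follow from the data $\xi'\neq0$ alone, and the chain lemma is false precisely for the measures your contradiction hypothesis puts in play. Hence the existence of a super-atomic set is unproven and the argument does not close; a different extraction idea is required here, and this is exactly the content supplied by the proofs the paper cites. A minor side remark: your characteristic-$p$ comment is off for odd $p$ --- if $p>2$, two disjoint subsets of value $c$ already violate atomicity, since $2c\notin\{0,c\}$, so atomicity alone yields the ultrafilter except in characteristic~$2$; unfortunately the counterexample above lives in characteristic~$2$, so this observation does not rescue the Zorn step.
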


It seems curious that this result has not been published before: the sufficiency has already been proved by Isbell in 1964: 
\cite{I1} Theorem 8.1. For the necessity, see \cite{R} Theorem 1.6.

\medskip

By a \textit{finite-dimensional linear partition} of a vector space $X$ we mean a surjective linear map $a \colon X \twoheadrightarrow  K^n$, where $n\in \N$. Every vector $t\in X^{\ast\ast}$ yields a choice of a member of the partition $a$ (or, equivalently, a choice of a vector $\alpha(a)$ of $K^n$): if $n=1$ we have  $a\in X^\ast$ and  the choice is simply $t(a) \in K$. In general, $a$ has components $a_1, \dots , a_n \in X^\ast$ and we put 
 $$
 \alpha (a) =\big(t(a_1), \dots , t(a_n)\big) \in K^n\,.
 $$
 This choice is \textit{coherent} in the expected sense: for every commutative triangle in $\Vec$
 $$
\xymatrix{  
& X\ar@{->>}[dl]_{b}
\ar@{->>}[dr]^a&\\
K^m\ar[rr] _u && K^n
}
$$
we have
$$
\alpha (a) = u\big(\alpha(b)\big)\,.
$$
Indeed, $u$ is a matrix $(u_{ij})$ and $a$ has  components $a_i=\sum\limits_{j} u_{ij} b_j$. Since $t$ is a linear map, $\alpha(a)$ has components
$$
t(a_i) =\sum u_{ij}t(b_j)\,.
$$
 This is precisely the $i$-th component of $u\big(t(b_1),\dots , t(b_m)\big)$.

\begin{lemma}\label{vect}
For every space $X$ the vectors of $X^{\ast\ast}$ are precisely the coherent choices of a member of every finite-dimensional linear partition of $X$. That is, the above passage $t\mapsto \alpha$ is bijective.
\end{lemma}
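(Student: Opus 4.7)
The plan is to mirror Lemma \ref{ultra0}: the assignment $t \mapsto \alpha$ is well-defined (this is verified in the paragraph preceding the statement), so it remains to construct an inverse. A preliminary remark that I will use throughout is that a linear map $a \colon X \to K^n$ is surjective iff its components $a_1,\dots,a_n \in X^\ast$ are linearly independent; in particular each component of a partition is itself a nonzero linear form, and hence a $1$-dimensional partition.

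For injectivity, if $t, t' \in X^{\ast\ast}$ induce the same coherent choice, then for every nonzero $a \in X^\ast$ (regarded as a $1$-dimensional partition) we have $t(a) = \alpha(a) = t'(a)$, while $t(0) = 0 = t'(0)$ by linearity; so $t = t'$.

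For surjectivity -- the main content -- I would take a coherent choice $\alpha$ and set $t \colon X^\ast \to K$ by $t(0) = 0$ and $t(a) = \alpha(a)$ otherwise. Granting linearity of $t$ for the moment, the required identity $\alpha(a) = (t(a_1), \ldots, t(a_n))$ for an arbitrary partition $a \colon X \twoheadrightarrow K^n$ follows from coherence applied to each projection $p_i \colon K^n \twoheadrightarrow K$, which gives $t(a_i) = \alpha(a_i) = p_i(\alpha(a))$.

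To verify linearity, I would handle scalar multiplication first: for nonzero $a$ and nonzero $\lambda$, coherence along $X \xrightarrow{a} K \xrightarrow{\cdot\lambda} K$ gives $t(\lambda a) = \lambda t(a)$, and the degenerate cases are trivial. For additivity, given $a_1, a_2 \in X^\ast$ linearly independent, the partition $b = (a_1, a_2) \colon X \twoheadrightarrow K^2$ satisfies $\alpha(b) = (t(a_1), t(a_2))$ by the previous step; since $a_1 + a_2 \neq 0$ (else $a_1, a_2$ would be dependent), coherence along $u \colon K^2 \to K$, $(x,y) \mapsto x + y$ yields $t(a_1 + a_2) = u(\alpha(b)) = t(a_1) + t(a_2)$. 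If $a_1, a_2$ are dependent, write $a_2 = c a_1$ and reduce to scalar linearity; general combinations $\lambda_1 a_1 + \lambda_2 a_2$ follow by combining these two ingredients. The main obstacle is simply the bookkeeping needed to stay within surjective maps when invoking coherence, which the preliminary remark on components makes routine.
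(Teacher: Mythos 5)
Your proposal is correct and follows essentially the same route as the paper: define $t(0)=0$ and $t(a)=\alpha(a)$ for $a\neq 0$, prove homogeneity via the coherence triangle over $k\cdot(-)$, prove additivity via coherence along $\oplus\colon K^2\to K$, and deduce the identity $\alpha(a)=\bigl(t(a_1),\dots,t(a_n)\bigr)$ from coherence along the projections $\pi_i$ (a step which, as your argument implicitly relies on, uses only the definition of $t$ and not its linearity, so there is no circularity). The only divergence is cosmetic: where the paper treats the additivity step uniformly via the image factorization $\langle a_1,a_2\rangle = j\cdot e$ with $e\colon X\twoheadrightarrow A$, $A=K$ or $K^2$, you split into the linearly independent case (where $(a_1,a_2)$ is itself a partition) and the dependent case (reduced to scalar multiplication), which is equally valid.
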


\begin{proof}
Let $\alpha$ be a coherent choice. We prove that there exists a unique $t\in X^{\ast\ast}$ with $\alpha(a) =\big(t(a_1),\dots , t(a_n)\big)$ for every $a= \langle a_1, \dots , a_n\rangle \colon X \twoheadrightarrow K^n$.

Given $a\in X^\ast$, then either $a=0$ or $a\colon X \twoheadrightarrow K$ is surjective. We define $t\in X^{\ast\ast}$ by
$$ 
t(0) =0 \quad \mbox{and}\quad t(a) =\alpha(a) \quad \mbox{for}\quad a\ne 0\,.
$$

(1) $t$ is linear. Indeed, to prove $t(ka) = kt(a)$ for  every $k\in K$, we can restrict ourselves to $k\ne 0$. Then  $ka$ is surjective whenever $a$ is. And we have a commutative triangle in $\Vec$ as follows
$$
\xymatrix{  
& X\ar[dl]_{a}
\ar[dr]^{ka}&\\
K\ar[rr]_{k\cdot(-)} && K
}
$$
Since $\alpha$ is coherent, this yields $t(ka) = kt(a)$.

To prove $t(a_1+a_2) = t(a_1) + t(a_2)$ , we can clearly assume $a_i\ne 0$ and also $a_1+a_2 \ne 0$ (for the case $a_1 = -a_2$ use $k=-1$ above). Let $\langle a_1, a_2\rangle \colon X \to K^2$ have the image factorization $j \cdot e$ where $e\colon X \to A$ for $A =K$ or $K^2$
is surjective and $j$ is injective. The following triangles
$$
\xymatrix{  
& &X\ar@{->>}[dll]_{a_1}
\ar@{->>}[drr]^{a_2}
\ar@{->>}[d]^e&&\\
K&K^2\ar[l]^{\pi_1}& A \ar[l]^j \ar[r]_j& K^2\ar[r]_{\pi_2} &K
}
$$
imply $\alpha(a_i) = \pi_i \cdot j\cdot \alpha(e)$, therefore
$$
\alpha(a_1)+ \alpha(a_2) =\oplus \cdot j \cdot \alpha(e)
$$
for the addition $\oplus \colon K^2 \to K$. And the following triangle
$$
\xymatrix{  
& X\ar@{->>}[dl]_{e}
\ar[dr]^{a_1+a_2}&\\
A\ar[r]_j &K^2\ar[r]_{\oplus} & K
}
$$
yields $\alpha(a_1)+ \alpha(a_2) =\alpha(a_1+a_2)$. This proves that $t$ is linear.

(2) $t$ satisfies $\alpha(a) =\big(t(a_1), \dots , t(a_n)\big)$. This is clear for $n=1$. For general $n$, given $a_i\ne 0$ use the coherence of $\alpha$ on the triangle
$$
\xymatrix{  
& X\ar[dl]_{a}
\ar[dr]^{a_i}&\\
K^n\ar[rr]_{\pi_i}&& K
}
$$

(3) $t$ is unique -- this is obvious.
\end{proof}

For every infinite cardinal $\lambda$ denote by
$$
\Vec_\lambda
$$
the full subcategory of $\Vec$ on spaces of dimension less that $\lambda$.
Recall from the Introduction Leinster's result that the full embedding
$$
\Vec_{\omega} \hookrightarrow  \Vec
$$
of finite-dimensional spaces has the double-dualization monad as the  codensity monad. Our Lemma allows for a direct proof.
B{\"{o}}rger's result mentioned above about the unique monad structure  on the ultrafilter functor $\du$ is based on the fact, proved in \cite{B}, that $\du$ is the terminal object of the category of set functors preserving finite coproducts. We now prove an analogous fact about $(-)^{\ast\ast}$. A functor  $F\colon \Vec \to\Vec $ is called  \textit{linear} if the induced maps $\Vec (X,Y) \to \Vec (FX, FY)$ are linear for all spaces $X,Y$.

\begin{lemma}\label{vect1} 
For every linear endofunctor $F$ of $\Vec $ with $FK\cong K$ there exists a natural transformation $\alpha\colon F\to (-)^{\ast\ast}$. It is unique up to a scalar multiple, i.e., every other such transformation has the form $k\alpha$ for some $k\in K$.
\end{lemma}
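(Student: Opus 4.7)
The plan is to produce $\alpha$ by feeding every vector of $FX$ through $F$-images of finite-dimensional linear partitions of $X$, then invoke Lemma~\ref{vect} to reinterpret the resulting family as an element of $X^{\ast\ast}$. The scalar indeterminacy will enter through a chosen isomorphism $\varphi\colon FK\to K$.

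First I would exploit the hypothesis that $F$ is linear. Since $\Vec$ is an additive category in which finite products and finite coproducts coincide with biproducts, a linear functor automatically preserves finite biproducts: the comparison morphism $FK^n\to(FK)^n$ built from $F$ applied to the biproduct projections is an isomorphism because $F$ preserves the biproduct equations $p_i\cdot e_j=\delta_{ij}\cdot\id$ and $\sum e_i\cdot p_i=\id$. Combining this with a fixed iso $\varphi\colon FK\to K$ produces a canonical iso $\varphi_n\colon FK^n\to K^n$. The crucial consequence is that for any linear map $u\colon K^m\to K^n$, we have $\varphi_n\circ Fu=u\circ\varphi_m$: indeed, because the biproduct structure represents $u$ as an $(n\times m)$-matrix in $K$, and $F$ acts $K$-linearly on hom-sets, preserves identities and composition, its restriction $F\colon\Vec(K,K)\to\Vec(FK,FK)\cong\Vec(K,K)$ is the identity on $K$, and hence on all matrices.

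Given $x\in FX$, define $\alpha_X(x)$ by assigning to each finite-dimensional linear partition $a\colon X\twoheadrightarrow K^n$ the vector $\varphi_n(Fa(x))\in K^n$. Coherence with respect to a commutative triangle $a=u\cdot b$ (where $b\colon X\twoheadrightarrow K^m$ and $u\colon K^m\to K^n$) follows from functoriality $Fa=Fu\cdot Fb$ and the identity $\varphi_n\circ Fu=u\circ\varphi_m$ established above. By Lemma~\ref{vect} this coherent family corresponds to a unique vector $\alpha_X(x)\in X^{\ast\ast}$. Linearity of $\alpha_X$ in $x$ is immediate from the linearity of $Fa$ and $\varphi_n$, and naturality of $\alpha$ in $X$ follows because any morphism $f\colon X\to Y$ sends a partition $a\colon Y\twoheadrightarrow K^n$ to $a\cdot f\colon X\to K^n$ (possibly no longer surjective, in which case the image factorisation reduces it to a genuine partition, while Lemma~\ref{vect} handles the zero case explicitly). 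Verifying that this is a natural transformation $\alpha\colon F\to (-)^{\ast\ast}$ is then straightforward diagram chasing, and the main obstacle was the coherence step, which was handled via biproduct preservation.

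For uniqueness, suppose $\beta\colon F\to(-)^{\ast\ast}$ is another such natural transformation. Evaluating at $K$ yields a linear map $\beta_K\colon FK\to K^{\ast\ast}\cong K$, and under the identifications $FK\cong K$ and $K^{\ast\ast}\cong K$ this map is multiplication by some scalar $k\in K$; the same holds for $\alpha_K$, giving a scalar $k'$, and after rescaling we may assume $\beta_K=k\cdot\alpha_K$. For arbitrary $X$, $x\in FX$ and $f\in X^{\ast}$, naturality applied to $f\colon X\to K$ gives
$$
\beta_X(x)(f) \;=\; \beta_K(Ff(x)) \;=\; k\cdot\alpha_K(Ff(x)) \;=\; k\cdot\alpha_X(x)(f),
$$
using the canonical identification $\xi\in K^{\ast\ast}\mapsto\xi(\id_K)\in K$ and the formula $f^{\ast\ast}(\xi)(\id_K)=\xi(f)$. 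Since vectors in $X^{\ast\ast}$ are determined by their values on $X^{\ast}$, this shows $\beta=k\cdot\alpha$, completing the uniqueness claim.
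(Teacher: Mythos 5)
Your proof is correct, and its uniqueness half coincides with the paper's own argument: naturality at each linear form $f\colon X\to K$, with the scalar read off from $\beta_K$ under the identification $\xi\in K^{\ast\ast}\mapsto\xi(\id_K)$. The existence half, however, takes a genuinely heavier route. The paper normalizes $FK=K$ and simply \emph{defines} $\alpha_X(x)(t)=Ft(x)$ for $t\in X^{\ast}$: linearity of $\alpha_X(x)$ in $t$ is exactly the hypothesis that $F$ acts linearly on the hom-set $\Vec(X,K)$, linearity in $x$ holds because each $Ft$ is a linear map, and naturality is a two-line computation. You instead build a coherent family $a\mapsto\varphi_n(Fa(x))$ over all finite-dimensional linear partitions $a\colon X\twoheadrightarrow K^n$ and invoke Lemma~\ref{vect}, which forces you first to prove that a linear endofunctor preserves finite biproducts (true, and correctly argued via the equations $p_i\cdot e_j=\delta_{ij}\cdot\id$ and $\sum e_i\cdot p_i=\id$, using $F0=0$) so as to obtain $\varphi_n$ and the matrix identity $\varphi_n\circ Fu=u\circ\varphi_m$ that drives coherence. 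Unwinding your construction at one-dimensional partitions shows it yields precisely $\alpha_X(x)(t)=\varphi(Ft(x))$, the $n=1$ case of your own formula, so Lemma~\ref{vect} and biproduct preservation are dispensable here: hom-linearity of $F$ already gives linearity in $t$, with $F0=0$ covering the zero form. What your detour buys is an explicit fit with the partition/limit description of $(-)^{\ast\ast}$ used in the codensity computations elsewhere in this section; what it costs is the extra biproduct lemma and a naturality verification you leave as ``straightforward diagram chasing'' --- which does go through, but only once one observes the displayed identity for all $t$, at which point it reduces to the paper's direct computation anyway.
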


\begin{proof}
	Without loss of generality assume $FK=K$.
	
	(1) Existence. For every space $X$ define a function
	$$
	\alpha_X \colon FX \to X^{\ast\ast}
	$$
	as follows: given $x\in FX$, then $\alpha_X(x) \colon X^\ast \to K$ is defined by 
	$$
	\alpha_X(x)(t) = Ft(x) \quad \mbox{for all} \quad t\colon X\to K\,.
	$$
	Since  $F$ is a linear functor, for every vector $x$ of $X$ the map $\alpha_X(x)(-)$ is linear. And since each $Ft(-)$ is a linear map, $\alpha_X$ is linear.
	Let us verify the naturality squares
	$$
	\xymatrix@C=3pc{
		FX \ar[d]_{Ff} \ar[r]^{\alpha_X} & X^{\ast\ast}\ar[d]^{f^{\ast\ast}}\\
		FY \ar[r]_{\alpha_Y} & Y^{\ast\ast}
	}
	$$
	The upper passage applied to $x\in FX$ yields $\alpha_X(x) \cdot f^\ast$. To every $s\colon Y\to K$ this function assigns
	$$
	f^{\ast\ast} \big(\alpha_X(x)\big) (s) = \alpha_X(x) (s\cdot f) = F(s\cdot f) (x).
	$$
	The lower passage assigns to $s$ the value
	$$
	\alpha_Y\big(Ff(x)\big)(s) = F s  \big(Ff(x)\big)\,, 
	$$
	which is the same one.
	
	(2)
	Uniqueness. Let $\beta \colon F\to (-)^{\ast\ast}$ be a  natural transformation. The component
	$$
	\beta_K \colon K \to K^{\ast\ast} \quad (\cong K)
	$$
	is a linear map, hence, it is given by a scalar $k\in K$ in the sense that
	$$
	\beta_K (l) (u) = u(kl) \quad \mbox{for all}\quad l\in K, \ u\colon K\to K\,.
	$$
	We are going to prove that $\beta =k\alpha$, i.e., for all $x\in FX$ and $t\colon X\to K$ we have
	$$
	\beta_X (x)(t) = k\cdot Ft(x)\,.
	$$
	This follows  from the naturality square
	$$
	\xymatrix@C=3pc{
		FX \ar[d]_{Ft} \ar[r]^{\beta_X} & X^{\ast\ast}\ar[d]^{t^{\ast\ast}}\\
		K \ar[r]_{\beta_K} & K^{\ast\ast}
	}
	$$
	We apply it to $x\in FX$ and get $t^{\ast\ast}(\beta_X(x))$ which to $\id_K \in K^\ast$ assigns the value $\beta_X(x)(t)$.  The lower passage applied to $x$ assigns to $\id_K$ the value $\beta_K (Ft(x)) =k\cdot Ft(x)$.
\end{proof}

\begin{corollary}\label{vect2}
	Let $((-)^{\ast\ast}, \eta, \mu)$ be the double-dualization monad. Then every monad structure on the endofunctor $(-)^{\ast\ast}$ has, for some scalar $k\in K\setminus\{0\}$, the form $((-)^{\ast\ast}, k\eta, k^{-1}\mu)$.
\end{corollary}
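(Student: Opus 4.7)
The strategy is to reduce the corollary to two applications of Lemma~\ref{vect1}. Both the identity functor $\id_\Vec$ and the iterate $T\circ T = (-)^{****}$ (with $T = (-)^{\ast\ast}$) are linear endofunctors of $\Vec$ satisfying $F(K)\cong K$, so Lemma~\ref{vect1} applies to each. Consequently every natural transformation $\id \to T$ is a scalar multiple of $\eta$, and every natural transformation $TT\to T$ is a scalar multiple of $\mu$; here I use that neither $\eta$ nor $\mu$ is zero, since each component of $\eta$ is injective, and $\mu=0$ would already contradict the classical monad law $\mu\cdot T\eta = \id_T$.

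Given an arbitrary monad structure $(T,\eta',\mu')$, the two classifications yield unique scalars $k,l\in K$ with $\eta' = k\eta$ and $\mu' = l\mu$. To tie $k$ and $l$ together, I would invoke the new monad's left unit law $\mu' \cdot T\eta' = \id_T$. Because $T$ is a linear endofunctor (dualization is $K$-linear, and a composite of linear functors is linear), scalars pull through $T$, and the law rewrites as
\[
\id_T \;=\; \mu' \cdot T\eta' \;=\; (l\mu) \cdot T(k\eta) \;=\; lk\,\bigl(\mu \cdot T\eta\bigr) \;=\; lk\cdot \id_T.
\]
Hence $lk = 1$, forcing $k \in K\setminus\{0\}$ and $l = k^{-1}$, exactly as claimed.

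I do not anticipate a genuine obstacle here: the substantive content is already in Lemma~\ref{vect1}, and the remainder is bookkeeping with the monad axioms, where the only point to be careful about is remembering that $T$ is linear so that scalars commute past $T$. For completeness one should note the converse: for every nonzero $k\in K$, the triple $((-)^{\ast\ast}, k\eta, k^{-1}\mu)$ does satisfy the three monad axioms, obtained from those of $(\eta,\mu)$ by pulling $k$ and $k^{-1}$ through. Thus these scalar rescalings are exactly the monad structures on $(-)^{\ast\ast}$.
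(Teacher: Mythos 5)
Your proof is correct and follows essentially the same route as the paper: apply Lemma~\ref{vect1} to get $\eta' = k\eta$ and $\mu' = l\mu$, then pull the scalars through the linear functor $(-)^{\ast\ast}$ in the unit law $\mu'\cdot T\eta' = \id$ to deduce $lk=1$. Your added checks (that $\eta$ and $\mu$ are nonzero, and the converse that each rescaling is indeed a monad) are sound refinements the paper leaves implicit, not a different argument.
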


Indeed, the above lemma implies that the unit is $k\eta$ and the multiplication is $l\mu$ for $k$, $l\in K$. From the unit law $(l\mu)\cdot [(-)^{\ast\ast} (k\eta)]=\id$, i.e., $lk \mu \cdot [(-)^{\ast\ast}\eta]=\id$, we deduce $lk=1$.

\begin{corollary}\label{vect3}
	The codensity monad of the embedding
	$$
	\Vec_\omega \hookrightarrow  \Vec
	$$
	is the double-dualization monad.
\end{corollary}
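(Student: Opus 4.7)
The plan is to combine Lemma \ref{vect}, which identifies $X^{\ast\ast}$ with the set of coherent choices on finite-dimensional linear partitions of $X$, with the rigidity of monad structures on $(-)^{\ast\ast}$ expressed in Corollary \ref{vect2}. So I would prove the isomorphism of underlying endofunctors first and then pin down the monad structure by identifying a single unit component.

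First I would unfold the formula for the codensity monad from Remark \ref{monad}(b). At a space $X$, the value $TX$ is the limit in $\Vec$ of the diagram $D_X\colon X/\Vec_\omega \to \Vec$ sending $a\colon X\to G$ to $G$, and an element of this limit is a family $(\xi_a)$ with $\xi_a\in G$ satisfying $h(\xi_a)=\xi_{h\cdot a}$ for every $h\colon G\to G'$ in $\Vec_\omega$. Since any such $a$ factors through its image and every finite-dimensional space is isomorphic to some $K^n$, the family is determined by its values on surjective $a\colon X\twoheadrightarrow K^n$, and the naturality condition restricted to these is precisely the coherence condition appearing in Lemma \ref{vect}. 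That lemma supplies a natural bijection $\alpha_X\colon TX\to X^{\ast\ast}$; naturality in $X$ is routine, as precomposing a partition of $Y$ with a linear map $f\colon X\to Y$ is exactly the action of $f^{\ast\ast}$ on evaluations.

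Since $(-)^{\ast\ast}$ is a linear endofunctor of $\Vec$ with $K^{\ast\ast}\cong K$, the natural isomorphism $\alpha$ transports the canonical codensity monad structure on $T$ to a monad structure $((-)^{\ast\ast},\eta',\mu')$. By Corollary \ref{vect2} there is a nonzero $k\in K$ with $\eta'=k\eta$ and $\mu'=k^{-1}\mu$, so it suffices to identify the unit on a single element. By Remark \ref{monad}(b) the codensity unit $\eta^T_X$ is characterized by $\pi_a\cdot\eta^T_X=a$ for every $a\colon X\to G$ in $X/\Vec_\omega$, which means $\eta^T_X$ sends $x\in X$ to the compatible family $(a(x))_a$. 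Tracing this family through Lemma \ref{vect}, the associated element of $X^{\ast\ast}$ takes a linear form $a\in X^\ast$ to $a(x)$, which is $\operatorname{ev}_x=\eta_X(x)$. Therefore $\eta'=\eta$, whence $k=1$ and the codensity monad coincides with the double-dualization monad.

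The main obstacle is really the opening identification: the limit is a priori taken over all $a\colon X\to G$ with $G\in\Vec_\omega$, and one must verify that restricting to surjective partitions $X\twoheadrightarrow K^n$ loses no data and translates the compatibility condition of the limit into the coherence condition of Lemma \ref{vect}. Once that is settled, linearity of $\alpha_X$ is automatic (both sides carry pointwise $K$-vector-space structures), and the monad law follows from Corollary \ref{vect2}.
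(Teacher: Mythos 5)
Your proposal is correct and follows essentially the same route as the paper: identify the codensity limit at $X$ with $X^{\ast\ast}$ via Lemma \ref{vect} (with limit projections given by $a^{\ast\ast}$), check that the codensity unit sends $x$ to $\operatorname{ev}_x$, and then invoke Corollary \ref{vect2} to conclude $k=1$ and hence that the multiplication is $\mu$ as well. Your explicit verification that restricting the slice $X/\Vec_\omega$ to surjections $X\twoheadrightarrow K^n$ loses no data is a point the paper treats implicitly, but it is the same argument.
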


This is analogous to Corollary 3.7 for $\du$: we first verify that if  $(T, \eta^ T, \mu ^T)$ is the codensity monad, then the endofunctor $T$ can be chosen to be $(-)^{\ast\ast}$. This follows from Remark \ref{colimdense1} just as for $\du$ above: recall that limits in $\Vec$ are created by  the forgetful functor to $\Set$. Thus $T$ assigns to $X$ a limit  of the diagram $D_X$ of all $a\colon X\to K^n$, $n\in \N$, which consists of all compatible choices of elements of $K^n$ for all $a$'s. And Lemma~\ref{vect} allows us to put $TX = X^{\ast\ast}$ with limit projections
$$
\pi_a : X^{\ast\ast} \xrightarrow{\ a^{\ast\ast}\ } (K^n)^{\ast\ast} \cong K^n\,,
$$
for all $a$. Given a morphism $f\colon X\to Y$, then $Tf$ is defined  by $ \pi_a\cdot\nobreak Tf =\pi_{a\cdot f}$, and it is easy to see that $f^{\ast\ast}$ satisfies these equalities, thus $Tf = f^{\ast\ast}$.

Next  recall that the monad unit $\eta^T \colon \operatorname{Id} \to T$ has, according to the limit formula, components $\eta_X\colon X\to TX$ determined by the commutative triangles below
\begin{center}
	$
	\xymatrix{
		X \ar [rr]^{\eta^T_X} \ar[dr]_a && TX\ar[dl]^{\pi_a}
		\\
		& K^n & }
	$
	\quad for all $a\colon X\to K^n$
\end{center}
Due to the above choice of the limit cone $\pi_a$, we see that the unit of $(-)^{\ast\ast}$ satisfies $\pi_a\cdot \eta_X =a$ holds for all $a$, thus, $\eta = \eta^T$. This means that for $(\eta^T, \mu^T)$ the scalar in the preceding lemma is $k=1$. 

\begin{definition}\label{vect4}
Let $\lambda$ be an infinite cardinal.

By a \textit{linear $\lambda$-partition} of a space  $X$ we mean a surjective linear map onto a space of dimension less that $\lambda$.

		A vector $x$ of $X^{\ast\ast}$ is called 
 \textit{$\lambda$-complete}  if for every linear $\lambda$-partition $a\colon X\to A$ we have: $a^{\ast\ast} (x)$ is an evaluation vector (i.e., it lies in the image of $\eta_A$).
	
\end{definition} 

\begin{lemma}\label{vect5}
	Let $\lambda$ be an infinite cardinal. For every space $X$ the $\lambda$-complete vectors of $X^{\ast\ast}$  are precisely the coherent choices of members of linear $\lambda$-partitions of $X$.
\end{lemma}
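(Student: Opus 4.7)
The plan is to extend the bijection of Lemma \ref{vect} from finite-dimensional linear partitions to all linear $\lambda$-partitions, using the restriction condition that identifies $\lambda$-complete vectors. The key observation is that for \emph{any} $x \in X^{\ast\ast}$ and any $\lambda$-partition $a\colon X \twoheadrightarrow A$, the element $a^{\ast\ast}(x) \in A^{\ast\ast}$ is a canonical candidate for a choice in $A$; however it lies in $\eta_A(A)$ exactly when $x$ is $\lambda$-complete. Moreover $\eta_A$ is injective (evaluation vectors separate points of $A^\ast$ in the reverse direction, and conversely $A^\ast$ separates points of $A$), so when $a^{\ast\ast}(x)$ is an evaluation vector there is a \emph{unique} $\alpha(a) \in A$ with $\eta_A(\alpha(a)) = a^{\ast\ast}(x)$.

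First I would define, for a $\lambda$-complete $x \in X^{\ast\ast}$, the map $a \mapsto \alpha(a)$ by the above prescription and verify coherence. Given a commutative triangle $u\cdot b = a$ of $\lambda$-partitions, functoriality of $(-)^{\ast\ast}$ gives $u^{\ast\ast}\cdot b^{\ast\ast} = a^{\ast\ast}$ and naturality of $\eta$ gives $u^{\ast\ast}\cdot \eta_B = \eta_A\cdot u$. Hence
$$
\eta_A(\alpha(a)) \;=\; a^{\ast\ast}(x) \;=\; u^{\ast\ast}(b^{\ast\ast}(x)) \;=\; u^{\ast\ast}(\eta_B(\alpha(b))) \;=\; \eta_A(u(\alpha(b))),
$$
and injectivity of $\eta_A$ yields $\alpha(a) = u(\alpha(b))$. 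Note that when $a\colon X\twoheadrightarrow K^n$ is a finite-dimensional partition with components $a_1,\dots,a_n$, evaluating $\eta_{K^n}(\alpha(a)) = a^{\ast\ast}(x)$ at each coordinate functional $\pi_i\colon K^n\to K$ recovers the formula $\alpha(a) = (x(a_1),\dots,x(a_n))$ of Lemma \ref{vect}, so the new $\alpha$ agrees with the old on finite-dimensional partitions.

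For the converse, given a coherent choice $\alpha$ on all linear $\lambda$-partitions, I would restrict it to finite-dimensional partitions and invoke Lemma \ref{vect} to produce a unique $x \in X^{\ast\ast}$ inducing this restriction. It remains to show that $x$ is $\lambda$-complete and that the $\alpha$ recovered from $x$ as in the first paragraph coincides with the original $\alpha$. For a $\lambda$-partition $a\colon X \twoheadrightarrow A$, I compare the vectors $a^{\ast\ast}(x)$ and $\eta_A(\alpha(a))$ in $A^{\ast\ast}$ by evaluating against an arbitrary $c \in A^\ast$. If $c = 0$ both values vanish; otherwise $c\colon A \twoheadrightarrow K$ is itself a $\lambda$-partition and $c\cdot a\colon X \twoheadrightarrow K$ is a one-dimensional partition. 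Then $a^{\ast\ast}(x)(c) = x(c\cdot a)$, which by Lemma \ref{vect} equals $\alpha(c\cdot a)$, while $\eta_A(\alpha(a))(c) = c(\alpha(a))$; coherence of $\alpha$ applied to the triangle $X \twoheadrightarrow A \twoheadrightarrow K$ identifies these two quantities. Thus $a^{\ast\ast}(x) = \eta_A(\alpha(a))$, which simultaneously witnesses that $x$ is $\lambda$-complete and that the induced choice equals $\alpha$.

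I expect no serious obstacle: the argument is essentially a diagrammatic bootstrap of Lemma \ref{vect}, and the only subtlety is being careful that \emph{surjectivity} of the partitions is preserved under the relevant compositions (automatic for $c\cdot a$ when $c \ne 0$) and that the image/injectivity of $\eta_A$ behaves as needed, both of which are standard facts about $(-)^{\ast\ast}$ in $\Vec$.
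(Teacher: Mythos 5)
Your proposal is correct and takes essentially the same route as the paper: for the converse direction you restrict the coherent choice to finite-dimensional partitions, invoke Lemma \ref{vect} to obtain $x$, and then verify $\eta_A(\alpha(a)) = a^{\ast\ast}(x)$ by evaluating at an arbitrary linear form $c\in A^\ast$ via coherence, which is exactly the paper's computation $u(\alpha(a)) = \alpha(u\cdot a) = (u\cdot a)(x)$. Your explicit check of the forward direction (naturality of $\eta$ plus injectivity of $\eta_A$, i.e., the fact that linear forms are collectively monic) is left implicit in the paper but rests on the same observations, so there is no substantive difference.
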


\begin{proof}
First recall that all linear forms of a given space are collectively monic. Indeed, given two distinct vectors $p$ and $q$, we can assume $p$ non-zero and choose a basis
containing $p$. Moreover, in case $q$ is not a scalar multiple of $p$ we include $q$
in that basis. The linear form assigning to every vector its $p$-coordinate sends $p$ to $1$ and $q$ to a different value.

For every coherent choice $\alpha$ as above it is our task to show that there exists
a vector $x$ of $X^{\ast\ast}$  with $\alpha(a)=a^{\ast\ast}(x)$ for every linear map $a: X\to A$ with $dimA<\lambda$. Since $\alpha$ yields, in particular, a choice for all finite-dimensional  linear partitions, we know from the above lemma that an $x$
exists such that $\alpha(a)=a^{\ast\ast}(x)$ holds for all linear forms $a$. Given a linear $\lambda$-partition, we prove {$\alpha(a)=a^{\ast\ast}(x)$} by verifying that
every linear form $u:A \to K$ merges both sides. By coherence, 
\[
u(\alpha(a))=\alpha(u\cdot a)=(u\cdot a)(x).
\]
And this is precisely the value of $u$ at $a^{\ast\ast}(x)$.
\end{proof}

\begin{notation} For every infinite cardinal $\lambda$ denote by
$(-)^{**}_\lambda$ the subfunctor of the double-dualization monad assigning to every space all $\lambda$-complete vectors of its double dual.
\end{notation}

\begin{corollary} The codensity monad of the emebdding
	$$
	\Vec_\lambda \hookrightarrow  \Vec
	$$
is the submonad of the double-dualization monad carried by $(-)^{**}_\lambda$.
\end{corollary}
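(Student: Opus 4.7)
The plan is to mirror the proof of Corollary~\ref{vect3}, using Lemma~\ref{vect5} in place of Lemma~\ref{vect}.  Let $(T,\eta^T,\mu^T)$ denote the codensity monad of $\Vec_\lambda\hookrightarrow\Vec$, so $TX=\lim D_X$ for the canonical diagram $D_X\colon X/\Vec_\lambda\to\Vec$, computed through the forgetful functor to $\Set$.  Since every $a\colon X\to A$ with $\dim A<\lambda$ factors as a linear $\lambda$-partition followed by a monomorphism, coherence reduces an element of $TX$ to a coherent choice of members of linear $\lambda$-partitions; Lemma~\ref{vect5} then delivers a linear bijection $TX\cong X^{**}_\lambda$ under which the limit projection $\pi_a$ becomes $\eta_A^{-1}\circ a^{**}|_{X^{**}_\lambda}$, well defined by $\lambda$-completeness.

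For functoriality I would verify that $f^{**}$ restricts to $X^{**}_\lambda\to Y^{**}_\lambda$ for every $f\colon X\to Y$: given a linear $\lambda$-partition $a\colon Y\twoheadrightarrow A$, factor $a\cdot f=j\cdot e$ with $e\colon X\twoheadrightarrow B$ surjective and $j$ mono; then $e$ is itself a $\lambda$-partition, $e^{**}(x)$ is an evaluation vector, and $j^{**}$ carries it to an evaluation vector of $A^{**}$.  This identifies $T$ with $(-)^{**}_\lambda$ as an endofunctor.  For the unit, naturality $a^{**}\cdot\eta_X=\eta_A\cdot a$ shows both that $\eta_X$ lands in $X^{**}_\lambda$ and that $\pi_a\cdot\eta_X=a$, which characterizes $\eta^T$; hence $\eta^T$ is the corestriction of the double-dualization unit.

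The main obstacle is the multiplication.  Because $(-)^{**}$ carries monomorphisms to monomorphisms, the inclusion $X^{**}_\lambda\hookrightarrow X^{**}$ induces a natural embedding $(X^{**}_\lambda)^{**}_\lambda\hookrightarrow X^{****}$, and I would show that $\mu_X$ carries this subspace into $X^{**}_\lambda$.  For any linear $\lambda$-partition $a\colon X\to A$, naturality gives $a^{**}\cdot\mu_X=\mu_A\cdot a^{****}$; since $\dim A<\lambda$, applying $\lambda$-completeness to the partition $\id_A$ forces $A^{**}_\lambda=\eta_A(A)$, so a short chase shows $a^{****}$ carries $(X^{**}_\lambda)^{**}_\lambda$ into the image of $\eta_{A^{**}}\circ\eta_A$, on which $\mu_A$ acts by the unit law to produce an evaluation vector.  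The resulting restriction of $\mu$ then satisfies the codensity formula $\pi_a\cdot\mu^T_X=\pi_{\pi_a}$ by a direct chase; alternatively, one may adapt Corollary~\ref{vect2} to $(-)^{**}_\lambda$, since $\eta^T=\eta$ forces the scalar to be~$1$.
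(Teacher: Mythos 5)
Your proposal is correct and takes essentially the same approach as the paper: the paper's proof of this corollary is literally the single sentence ``completely analogous to that of Corollary \ref{vect3},'' and your argument is exactly that analogy carried out in detail --- identifying $T$ with $(-)^{**}_\lambda$ via Lemma \ref{vect5}, matching the unit, and settling the multiplication either by your direct chase (using $A^{**}_\lambda=\eta_A(A)$ for $\dim A<\lambda$ and the unit law) or by adapting the scalar-uniqueness argument of Corollary \ref{vect2} with $\eta^T=\eta$ forcing $k=1$. You in fact supply more of the verification (functoriality of the restriction of $f^{**}$, the embedding $(X^{**}_\lambda)^{**}_\lambda\hookrightarrow X^{****}$, the codensity formula $\pi_a\cdot\mu^T_X=\pi_{\pi_a}$) than the paper records.
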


The proof is completely analogous to that of Corollary \ref{vect3}.The following is the analogue of Corollary \ref{isbell}. 

\begin{theorem}	
	The category $\Vec^{\op}_\lambda$ is dense in $\Vec^{\op}$ iff every $\lambda$-complete vector of $X^{\ast\ast}$ is an evaluation vector (for all spaces $X$).
\end{theorem}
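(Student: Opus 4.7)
The plan is to treat this as the vector-space analogue of Isbell's Corollary \ref{isbell} and reduce everything to the identification of the codensity monad of $\Vec_\lambda\hookrightarrow\Vec$ already carried out above. Density of $\Vec^{\op}_\lambda$ in $\Vec^{\op}$ means codensity of $\Vec_\lambda$ in $\Vec$: for every space $X$ the canonical cone $(a\colon X\to A)_{a\in X/\Vec_\lambda}$ exhibits $X$ as the limit in $\Vec$ of the diagram $D_X\colon X/\Vec_\lambda\to\Vec$ sending $a$ to its codomain.

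First I would identify this limit. By the preceding corollary, the codensity monad of $\Vec_\lambda\hookrightarrow\Vec$ is carried by $(-)^{\ast\ast}_\lambda$, so $\lim D_X \cong X^{\ast\ast}_\lambda$. Explicitly, using Lemma \ref{vect5} to view $\lambda$-complete vectors as coherent choices, the projection $\pi_a\colon X^{\ast\ast}_\lambda\to A$ attached to $a\colon X\to A$ with $\dim A<\lambda$ sends a $\lambda$-complete vector $x$ to the unique element of $A$ whose evaluation vector equals $a^{\ast\ast}(x)$. (For a non-surjective $a$ we still get such an element by factoring $a$ through its image, which is itself a linear $\lambda$-partition; naturality of $\eta$ then propagates the evaluation-vector property.)

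Next I would compute the canonical comparison map $c_X\colon X\to X^{\ast\ast}_\lambda$ induced by the cone $(a)_{a\in X/\Vec_\lambda}$. For every $y\in X$ and every $a\colon X\to A$ one has $a^{\ast\ast}(\eta_X(y))=\eta_A(a(y))$, so $\eta_X(y)$ lies in $X^{\ast\ast}_\lambda$ and $\pi_a(\eta_X(y))=a(y)$. Hence $c_X$ is exactly the corestriction of the double-dualization unit $\eta_X\colon X\to X^{\ast\ast}$ to the subspace $X^{\ast\ast}_\lambda$.

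Therefore $\Vec_\lambda$ is codense in $\Vec$ iff $c_X$ is an isomorphism for every space $X$. Since the linear forms of $X$ are collectively monic (as noted in the proof of Lemma \ref{vect5}), $\eta_X$ is always a monomorphism, so $c_X$ is an isomorphism iff it is surjective, iff every $\lambda$-complete vector of $X^{\ast\ast}$ lies in the image of $\eta_X$, i.e.\ is an evaluation vector. The only substantive point -- identifying $\lim D_X$ with $X^{\ast\ast}_\lambda$ and the comparison map with $\eta_X$ -- was already done in Lemma \ref{vect5} and the preceding corollary, so there is no real obstacle beyond checking these compatibilities.
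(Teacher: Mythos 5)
Your proposal is correct and takes essentially the same route as the paper: both identify $\lim D_X$ over $X/\Vec_\lambda$ with the subspace of $\lambda$-complete vectors of $X^{\ast\ast}$ via Lemma \ref{vect5} (limits being created in $\Set$), and both recognize the canonical comparison map as the corestriction of the unit $\eta_X$, so that codensity amounts to every $\lambda$-complete vector being an evaluation vector. The only difference is organizational: you handle both directions at once through the comparison map (using that $\eta_X$ is monic, and explicitly factoring non-surjective $a$ through its image, a detail the paper leaves implicit), whereas the paper proves the forward direction directly and obtains the converse as a special case of the subsequent cofinality proposition.
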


\begin{proof}
	For every space $X$ let $D_X\colon X\big/\Vec_\lambda\to\Vec$ be the diagram assigning  to each $a\colon X\to A$ the codomain. The limit of $D_X$ is the subspace of $X^{\ast\ast}$ formed by all $\lambda$-complete vectors. To see this, recall that limits are created by the forgetful functor to $\Set$. Thus, $\lim D_X$ can be described  as the space of all compatible choices $(\hat-)$ of elements  $\hat a\in A$ for all $a\colon X\to A$ with $\dim A<\lambda$. By the preceding lemma,
	we conclude that if $\lambda$ has the property in our theorem, then $\Vec_\lambda$ is codense in $\Vec$. Indeed, for every space $X$ the canonical limit of $D_X$ is $\eta_X [X] \cong X$.
 The converse implication is a particular case of the next proposition.
\end{proof}

\begin{prop}
 If $\cb$ is a small codense subcategory in $\Vec$ and $\lambda$ is an infinite cardinal larger than the dimensions of all spaces of $\cb$, then,
for all spaces $X$, every $\lambda$-complete vector of $X^{\ast\ast}$ is an evaluation vector.
\end{prop}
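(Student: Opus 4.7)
The plan is, given a $\lambda$-complete vector $x\in X^{\ast\ast}$, to produce $y\in X$ and then verify $\eta_X(y)=x$. The strategy uses codensity of $\cb$ twice: first on $X$ to produce $y$, and second on the codomain $K$ of linear forms on $X$ to force the equality $\eta_X(y)=x$.

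The first step extracts data from $x$. For every $b\colon X\to B$ with $B\in\cb$, we have $\dim B<\lambda$, so $\lambda$-completeness of $x$ gives a unique $\hat b\in B$ with $b^{\ast\ast}(x)=\eta_B(\hat b)$; uniqueness comes from the injectivity of $\eta_B$, i.e., from the fact (recalled in the proof of Lemma~\ref{vect5}) that linear forms collectively separate points. Naturality of $\eta$ combined with this injectivity shows that the family $(\hat b)_{b\in X/\cb}$ is coherent: for each morphism $f\colon B\to B'$ in $\cb$ we have $f(\hat b)=\widehat{f\cdot b}$, since $\eta_{B'}(f(\hat b))=f^{\ast\ast}(\eta_B(\hat b))=f^{\ast\ast}b^{\ast\ast}(x)=(f\cdot b)^{\ast\ast}(x)=\eta_{B'}(\widehat{f\cdot b})$. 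By codensity of $\cb$, $X$ is the canonical limit of $D_X\colon X/\cb\to\Vec$, so the coherent family $(\hat b)$ corresponds to a unique $y\in X$ with $b(y)=\hat b$ for all $b\in X/\cb$.

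The second step verifies $\eta_X(y)=x$. Since both are elements of $X^{\ast\ast}=[X^\ast,K]$, it suffices to show $t(y)=x(t)$ for every linear form $t\colon X\to K$. Unwinding, $t^{\ast\ast}(x)$ evaluated at $\id_K$ equals $x(t)$, so $\lambda$-completeness of $x$ (using $\dim K=1<\lambda$) yields $t^{\ast\ast}(x)=\eta_K(x(t))$. The desired equality $t(y)=x(t)$ now takes place in $K$, and here codensity intervenes a second time: $K$ itself is the canonical limit of its own diagram $D_K\colon K/\cb\to\Vec$, so it suffices to check $c(t(y))=c(x(t))$ for every $c\colon K\to C$ with $C\in\cb$. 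The left-hand side equals $(c\cdot t)(y)=\widehat{c\cdot t}$ by construction of $y$. For the right-hand side, naturality gives $\eta_C(c(x(t)))=c^{\ast\ast}(\eta_K(x(t)))=c^{\ast\ast}(t^{\ast\ast}(x))=(c\cdot t)^{\ast\ast}(x)=\eta_C(\widehat{c\cdot t})$, and injectivity of $\eta_C$ yields $c(x(t))=\widehat{c\cdot t}$. The two sides agree, completing the argument.

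The main obstacle is recognizing that codensity must be invoked a second time, now applied to $K$ (or more generally to codomains of linear $\lambda$-partitions), rather than only to $X$. Producing the candidate $y$ from the coherent family on $X/\cb$ is immediate, but verifying that this $y$ recovers the original $x$ is not, because $x$ is indexed by \emph{all} of $X^\ast$, not only by those forms factoring through objects of $\cb$. It is precisely codensity at the codomain that allows elements of $K$ to be probed by morphisms into $\cb$, and once this is noticed the argument becomes a routine manipulation of the naturality square for $\eta$.
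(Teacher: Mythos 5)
Your proof is correct, but it is packaged quite differently from the paper's. The paper argues at the level of diagrams: since every space of $\cb$ has dimension less than $\lambda$, the canonical diagram $\bar D_X\colon X\big/\cb\to\Vec$ is cofinal in $D_X\colon X\big/\Vec_\lambda\to\Vec$, so the two diagrams have the same cones and hence the same limit; codensity gives $\lim \bar D_X\cong X$, while the computation in the preceding theorem (via Lemma~\ref{vect5}) identifies $\lim D_X$ with the space of $\lambda$-complete vectors of $X^{\ast\ast}$, whence these are exactly the evaluation vectors. Your argument is the element-wise unfolding of this: constructing $y$ from the compatible family $(\hat b)$ is the passage from a $\lambda$-complete vector to an element of $\lim \bar D_X\cong X$, and your second invocation of codensity --- probing $K$ by the jointly injective family of morphisms $c\colon K\to C$, $C\in\cb$ --- is precisely the concrete content behind the paper's assertion that every cone of $\bar D_X$ extends uniquely to a cone of $D_X$. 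You even get a simplification the abstract version does not exploit: since elements of $X^{\ast\ast}$ are determined by their values on $X^\ast$, you only need the extension at the one-dimensional codomain $K$ rather than at all codomains of dimension less than $\lambda$. What your route buys is self-containedness (no cofinality claim, no appeal to the identification of $\lim D_X$ with the $\lambda$-complete vectors; just the definition of $\lambda$-completeness, naturality of $\eta$, and separation of points by linear forms); what the paper's route buys is brevity and uniformity with the $\Set^{\op}$ case treated in Corollary~\ref{isbell1}. One small point to patch: $\lambda$-completeness in Definition~\ref{vect4} is stated only for \emph{surjective} maps, whereas your maps $b\colon X\to B$ with $B\in\cb$ (and likewise $t=0$ and the composites $c\cdot t$) need not be surjective. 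The fix is one line: factor $b=j\cdot e$ through its image $B_0$, which has dimension less than $\lambda$, so that $e^{\ast\ast}(x)=\eta_{B_0}(\hat e)$, and then naturality gives $b^{\ast\ast}(x)=j^{\ast\ast}\bigl(\eta_{B_0}(\hat e)\bigr)=\eta_B\bigl(j(\hat e)\bigr)$, an evaluation vector as needed. With that remark inserted, your argument is complete.
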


\begin{proof}
 The category $\Vec_\lambda$ clearly contains $\cb$ as a cofinal subcategory. Denote by $\bar D_X \colon X\big/\cb\to \Vec$ and $ D_X \colon X\big/\Vec_\lambda\to \Vec$ the canonical diagrams. They have the same cones. More precisely, every cone of $D_X$ restricts to one of $\bar D_X$, and every cone of $\bar D_X$ can, via cofinality, be uniquely extended to one of $D_X$. Hence, $\lim \bar D_X =\lim D_X$. Thus, for every space $X$ the limit of $D_X$, consisting of all $\lambda$-complete vectors of $X^{\ast\ast}$, yields $X\cong \eta[X]$. Therefore, $\lambda$ has the property of our theorem and proposition.
\end{proof}


\begin{thebibliography}{EAB}  
 
 \bibitem{AHR} J. Ad\'{a}mek, H. Herrlich  and  Reiterman J., {\em Cocompleteness almost implies completeness}, Proc. Conf. ``Categorical Topology", World Sci. Publ. Singapore (1989), 246-256.
 
\bibitem{AR} J. Ad\'{a}mek and J. Rosick\'{y},  {\em Locally Presentable and Accessible Categories}, Cambridge University Press 1994.             

\bibitem{Ba} C. Bardavid, {\em  Profinite completion and double-dual: isomorphisms and counter-examples}, arXiv:0801.2955.

\bibitem{Be} D. J. Benson, {\em Infinite dimensional modules for finite groups}, In: Infinite Length Medules (ed. H. Krause, 
C. M. Ringel), Springer (2000), 251-272.

\bibitem{B} R. B{\"{o}}rger, {\em Coproducts and ultrafilters}, J. Pure Appl. Algebra 46 (1987), 35-47.

\bibitem{GH} F. Galvin and A. Horn, {\em Operations preserving all equivalence relations}, Proc. Amer. Math. Soc. 24 (1970), 521-523.

\bibitem{I} J. R. Isbell, {\em  Adequate subcategories}, Illinois J. Math. 4 (1960), 541-552.

\bibitem{I1} J. R. Isbell, {\em Subobjects, adequacy, completeness and categories of algebras}, Rozprawy Matematyczne 36 (1964).

\bibitem{KG} J. F. Kennison and D. Gildenhuis, {\em Equational completions, model-induced triples and pro-objects}, J. Pure Appl. Algebra 1 (1971), 317-346.

\bibitem{L} T. Leinster, {\em Codensity and the ultrafilter monad}, Th. Appl. Categ. 28 (2012), 332-370.


\bibitem{R} J. Rosick\'y, {\em Codensity and binding categories}, Comment. Math. Univ. Carol. 16 (1975), 515-529.
   
\bibitem{RTA} J. Rosick\'y, V. Trnkov\'{a} and J. Ad\'{a}mek, {\em  Unexpected properties of locally presentable categories}, Alg. Univ. 27 (1990), 153-170.

   
 \end{thebibliography}
 \end{document}